\documentclass[12pt,reqno]{amsart}
\usepackage{amsmath, amssymb, amsthm, amsfonts} 
\usepackage{url}
\usepackage[breaklinks]{hyperref}
\usepackage{tikz}
\usetikzlibrary{decorations.markings}
\usepackage{array}
\newcommand{\Arg}{\operatorname{Arg}}

\renewcommand{\Re}{\operatorname{Re}}

\renewcommand{\(}{\left\(}
\renewcommand{\)}{\right\)}
\renewcommand{\[}{\left\[}
\renewcommand{\]}{\right\]}

\numberwithin{equation}{section}
 \theoremstyle{plain}
\newtheorem{theorem}{Theorem}[section]
\newtheorem{lemma}[theorem]{Lemma}
\newtheorem{remark}[]{Remark}

\newtheorem{corollary}[theorem]{Corollary}

   \makeatletter
\def\proof{\@ifnextchar[{\@oproof}{\@nproof}}
\def\@oproof[#1][#2]{\trivlist\item[\hskip\labelsep\textit{#2 Proof of\
#1.}~]\ignorespaces}
\def\@nproof{\trivlist\item[\hskip\labelsep\textit{Proof.}~]\ignorespaces}

\makeatother

\begin{document}
\title[A number field extension of Ramanujan's identity]{A number field extension of Ramanujan's identity and its integral representation via the generalized Riemann $\Xi$-function}

\author{Diksha Rani Bansal}
\address{Diksha Rani\\ Department of Mathematics \\
Indian Institute of Technology Indore \\
Indore, Simrol, Madhya Pradesh 453552, India.}  
\email{dikshaba1233@gmail.com,  mscphd2207141001@iiti.ac.in}

\thanks{2020 \textit{Mathematics Subject Classification.} Primary 11M06,  11R42; Secondary 33E20.\\
\textit{Keywords and phrases.} Riemann zeta function,  Dedekind zeta function, Riemann $\Xi$-function, Digamma funcion, Transformation formula, Ramanujan--Koshliakov formula}

\begin{abstract}
On pages 219--220 of Ramanujan's Lost Notebook appears an intriguing modular transformation whose integral representation is expressed in terms of the Riemann $\Xi$-function. In this paper, we establish a number field analogue of this identity. We also obtain an integral representation for the transformation formula associated with the functional equation of the higher power of the Dedekind zeta function recently studied by the present author and Maji. Interestingly, we recover Hardy's integral representation of the well-known Jacobi theta relation involving the Riemann $\Xi$-function. We further obtain a number-field analogue of the extended Ramanujan--Koshliakov formula established by Dixit.
\end{abstract}

\maketitle

\section{{Introduction}}
Ramanujan’s Lost Notebook \cite{Ramanujan} remains a rich source of results in analytic number theory. One of the notable results on pages 219–220, is a transformation formula involving the logarithmic derivative of the gamma function, the Riemann zeta function and the function
\begin{align}\label{Series Lambda}
\lambda(x) := \psi(x) + \frac{1}{2x} - \log(x),
\end{align}
where $\psi(x) = \frac{\Gamma'(x)}{\Gamma(x)}$. If $\alpha$, $\beta$ are positive numbers such that $\alpha\beta = 1$, then the identity is given by
\begin{align}\label{Id 220}
G(\alpha) &= G(\beta) = -\frac{1}{\pi^{\frac{3}{2}}}\int\limits_0^\infty \left|\Xi\left(\frac{t}{2}\right)\Gamma\left(\frac{-1+it}{4}\right)\right|^2\frac{\cos\left(\frac{t}{2}\log\alpha\right)}{(1+t^2)}\,\text{d}t,
\end{align}
where $G(x)$ is defined by
\begin{align*}
G(x) = \sqrt{x}\left\{\frac{\gamma - \log(2\pi x)}{2x} + \sum_{m=1}^{\infty}\lambda(mx)\right\}.
\end{align*}
What makes this identity especially striking is that its integral representation involves the Riemann $\Xi$-function defined as
$$
\Xi(t) = \left(-\frac{1}{8} - \frac{t^2}{2} \right)\pi^{-\frac{1}{4}-\frac{it}{2}}\Gamma\left(\frac{1}{4} + \frac{it}{2}\right)\Gamma\left(\frac{1}{2} + it\right)\zeta\left(\frac{1}{2} + it\right),
$$
whose real zeros give the non-trivial zeros of the Riemann zeta function on the critical line. The result \eqref{Id 220} can be viewed as an example of a general type of relation where a function satisfies $F(\alpha)=F(\beta)$ whenever $\alpha\beta=1$. Similar identities later appeared in the work of Koshliakov \cite{Koshliakov} and Ferrar \cite{Ferrar1936}.

The identity \eqref{Id 220} has subsequently been studied from several points of view. Its first equality was rediscovered by Guinand \cite[p.~18]{Guinand47} in a slightly different form. He later gave a simpler proof with extensions involving derivatives of the $\psi$-function in \cite{Guinand52}; however, he did not note its connection with Ramanujan’s integral involving the Riemann's function. Berndt and Dixit \cite{BD2010} subsequently gave a proof of \eqref{Id 220} using integrals studied by Ramanujan \cite[Equation (22), p.~260]{Ramanujan1915}; see also \cite[Equation (22), p.~17]{Ramanujan1962}. Dixit \cite{Dixit2010} obtained another proof using Mellin transforms and also derived the identity as a limiting case of a more general transformation formula \cite[Theorem 1.4]{Dixit2011}. Character analogues of Ramanujan type integrals involving the Riemann $\Xi$-function can be found in \cite{Dixit2012}. 
Readers may also see \cite{Conway78, DK2021}.
%
%
The transformation phenomenon $F(\alpha)=F(\beta)$ where $\alpha, \beta > 0$ such that $\alpha\beta=1$, however, occurs in a much broader setting. One of its fundamental examples is the Jacobi theta transformation
\begin{align*}
\mathcal{H}_1\left(\alpha \right) = \mathcal{H}_1(\beta),
\end{align*}
where $\mathcal{H}_1(x)$ is the well-known Jacobi theta function, defined as 
\begin{align*}
\mathcal{H}_1(x) = \sqrt{x}\left(1 + 2\sum_{n = 1}^{\infty}e^{-\pi n^2 x^2}\right),
\end{align*}
which is equivalent to the functional equation of the Riemann zeta function. Hardy \cite{Hardy} established the following integral representation of the Jacobi theta relation to prove the infinitude of the non-trivial zeros of the Riemann zeta function on the critical line,
\begin{align}\label{Jacobi relation}
\mathcal{H}_1\left(\alpha \right) = \mathcal{H}_1(\beta) = -\frac{4}{\pi}\int\limits_{0}^{\infty}\frac{\Xi\left(\frac{t}{2}\right)}{\left(1 + t^2\right)} \cos\left(\frac{t}{2}\log\alpha\right)\mathrm{d}t.
\end{align} 
Another important example is the Ramanujan--Koshliakov \cite[p.~97]{GB}, \cite{Koshliakov}, \cite[p.~253]{Ramanujan},  formula
\begin{align*}
\mathcal{H}_2\left(\alpha \right) = \mathcal{H}_2(\beta),
\end{align*}
which is equivalent to the functional equation of $\zeta^2(s)$, where $\mathcal{H}_2(x)$ is given by
$$\mathcal{H}_2(x) = \sqrt{x}\left(4\sum_{n=1}^{\infty}d(n)\text{K}_0(2n\pi x) - \frac{\gamma - \log(4\pi x)}{x} \right)$$
Here, $\gamma$ is the Euler-Mascheroni constant, $d(n)$ denotes the number of divisors of $n$, and $K_0$ is the modified Bessel function of the second kind.
Dixit \cite{Dixit2010} developed this perspective by expressing the common values of several transformations of the form
$F(\alpha)=F(\beta)$ in terms of integrals involving the Riemann $\Xi$-function. In particular, he obtained the following extended form of the Ramanujan--Koshliakov formula. If $\alpha,\beta>0$ satisfy $\alpha\beta=1$, then
\begin{align}
\mathcal{H}_2\left(\alpha \right) = \mathcal{H}_2(\beta) = \frac{32}{\pi}\int\limits_{0}^{\infty}\frac{\Xi^{2}\left(\frac{t}{2}\right)\cos\left(\frac{t}{2}\log\alpha\right)}{\left(1 + t^2\right)^{2}} \mathrm{d}t. \label{Extended Koshliakov formula}
\end{align}
Ferrar \cite{Ferrar1936} obtained a simultaneous extension of the
Jacobi theta transformation and the Ramanujan--Koshliakov formula by
considering a transformation associated with the functional equation
of $\zeta^k(s)$, where $k$ is a positive integer. Recently, the
present author and Maji \cite{BM2} extended Ferrar's work to number
fields, obtaining a transformation associated with the functional
equation of $\zeta_{\mathbb{F}}^m(s), m \in \mathbb{Z}\setminus\{0\}$, where $\zeta_{\mathbb{F}}(s)$ is the Dedekind zeta
function defined below. 

Let $\mathbb{F}$ be a number field of degree $ d=r_1+2r_2, $ where $r_1$ denotes the number of real embeddings of $\mathbb{F}$ and $r_2$ the number of pairs of complex conjugate embeddings. Let $D$ be the absolute value of the discriminant of $\mathbb{F}$, and let $\mathcal{O}_{\mathbb{F}}$ denote its ring of integers. For each positive integer $n$, let $\mathtt{a}_{\mathbb{F}}(n)$ denote the number of non-zero integral ideals of $\mathcal{O}_{\mathbb{F}}$ having norm $n$.
The Dedekind zeta function associated with $\mathbb{F}$, is given by
\begin{align}\label{Dedekind zeta}
\zeta_{\mathbb{F}}(s)
=\sum_{n=1}^{\infty}\frac{\mathtt{a}_{\mathbb{F}}(n)}{n^s}, \quad \Re(s)>1.
\end{align}
The Dedekind zeta function $\zeta_{\mathbb F}(s)$ satisfies the functional equation,
\begin{align}\label{Dedekind zeta functional equation}
\Omega_{\mathbb{F}}(s) = \Omega_{\mathbb{F}}(1-s),
\end{align}
where 
\begin{align}\label{Omega_F of fnal eqn}
\Omega_{\mathbb{F}}(s) &= \left(\frac{D}{4^{r_2}\pi^d}\right)^{\frac{s}{2}}\Gamma^{r_1}\left(\frac{s}{2}\right)\Gamma^{r_2}(s)\zeta_{\mathbb{F}}(s).
\end{align}
It is known that $\zeta_{\mathbb{F}}(s)$ has a simple pole at $s=1$ with the residue given by the class number formula: 
\begin{align}\label{Class number formula}
\lim_{s \rightarrow 1}(s-1)\zeta_{\mathbb{F}}(s) = \frac{2^{r_1}(2\pi)^{r_2}}{\sqrt{D}}\frac{R_{\mathbb{F}}h_{\mathbb{F}}}{w_{\mathbb{F}} } :=H_{ \mathbb{F}},
\end{align}
where $w_{\mathbb{F}}$ is the number of roots of unity contained in $\mathcal{O}_\mathbb{F}, h_{\mathbb{F}}$ denotes the class number, and $R_{\mathbb{F}}$ is the regulator of $\mathbb{F}$. 
We next introduce the following entire function:
{\allowdisplaybreaks
\begin{align}\label{xi function}
\xi_{\mathbb{F}}(s) &:= \frac{1}{2}s(s-1) \left(\frac{D}{4^{r_2}\pi^d}\right)^{\frac{s}{2}}\Gamma^{r_1}\left(\frac{s}{2}\right)\Gamma^{r_2}(s)\zeta_{\mathbb{F}}(s) \nonumber \\
&= \frac{1}{2}s(s-1)\Omega_{\mathbb{F}}(s) = \xi_{\mathbb{F}}(1-s).
\end{align}}
Let $\Xi_{\mathbb F}(t) = \xi_{\mathbb{F}}\left(\frac{1}{2}+it\right)$ denote the generalized Riemann $\Xi$-function associated with $\zeta_{\mathbb F}(s)$, defined by
\begin{align}\label{Xi_F(t)}
\Xi_{\mathbb F}(t) := \left(-\frac{1}{8} - \frac{t^2}{2} \right)\left(\frac{D}{4^{r_2}\pi^d}\right)^{\frac{1}{4}+\frac{it}{2}}\Gamma^{r_1}\left(\frac{1}{4} + \frac{it}{2}\right)\Gamma^{r_2}\left(\frac{1}{2} + it\right)\zeta_{\mathbb{F}}\left(\frac{1}{2} + it\right).
\end{align}
Using $\overline{\Gamma(s)} = \Gamma(\overline{s})$ and \eqref{xi function}, one can prove that $\Xi_{\mathbb F}(t)$ is real and even for real values of $t$.
%
Let $k$ be a positive integer and $\mathbb{F}$ be any number field of degree $d$. Let $\mathtt{a}_{\mathbb{F}, k}(n)$ be defined by
\begin{align}
\zeta_{\mathbb{F}}^k(s) := \sum_{n=1}^{\infty}\frac{\mathtt{a}_{\mathbb{F}, k}(n)}{n^s}, \quad \Re(s)>1.  \label{Coeff a_k}
\end{align}
The author and Maji \cite[Theorem~2.1]{BM2} proved that the functional equation of $\zeta_\mathbb{F}^k(s)$ is equivalent to the theta relation 
\begin{align}\label{Theta reln for dedekind zeta power k}
W_{\mathbb{F}, k}\left(\frac{1}{x}\right) = \sqrt{x}W_{\mathbb{F}, k}(x), \quad \text{for} \,\,\, x \neq 0,\,\,  |\Arg(x)| < \min\left\{\frac{\pi d}{2}, \pi\right\},
\end{align}
where $W_{\mathbb{F}, k}(x)$ is given by
\begin{align}\label{W_F,k(x)}
W_{\mathbb{F}, k}(x) := \sum_{n=1}^{\infty}\mathtt{a}_{\mathbb{F}, k}(n)\tilde{Z}_{kr_1, kr_2}\left(\frac{2^{kr_2}\pi^{kd/2} n\sqrt{x}}{\sqrt{D^k}} \right) - R_0(x),
\end{align}
with
 $$
 R_0(x) = \frac{1}{(k-1)!}\lim_{s \rightarrow 0} \frac{\mathrm{d}^{k-1}}{\mathrm{d}s^{k-1}}\left(s^k \Omega_{\mathbb{F}}^k(s)x^{-s/2}\right),
 $$
and $\tilde{Z}_{r_1, r_2}(x)$ is defined in \eqref{Defn of Z}. Using the functional equation \eqref{Dedekind zeta functional equation} of $\zeta_{\mathbb F}(s)$ in the definition of $R_0(x)$, one can verify that $R_0(x)=-\frac{1}{\sqrt{x}}R_1(x)$, where 
$$
R_1(x) = \frac{1}{(k-1)!}\lim_{s\to 1}\frac{\text{d}^{k-1}}{\text{d}s^{k-1}}\left((s-1)^{k}\Omega_\mathbb{F}^k(s)x^{s/2} \right).
$$
We substitute this value of $R_0(x)$ in \eqref{W_F,k(x)}, replace $x$ by $x^2$ in \eqref{Theta reln for dedekind zeta power k} and multiply the resulting equation by $\frac{1}{\sqrt{x}}$. Setting $\alpha=x$ and $\beta=\frac{1}{x}$, so that $\alpha\beta=1$, we rewrite \eqref{Theta reln for dedekind zeta power k} as
\begin{align}\label{Theta reln for dedekind zeta power k-x by x^2}
\mathcal{H}_{\mathbb{F}, k}(\alpha) = \mathcal{H}_{\mathbb{F}, k}(\beta),
\end{align}
where
\begin{align}
\mathcal{H}_{\mathbb{F},k}(x) &= \sqrt{x}\sum_{n=1}^{\infty}\mathtt{a}_{\mathbb{F}, k}(n)\tilde{Z}_{kr_1, kr_2}\left(\frac{2^{kr_2}\pi^{kd/2} nx}{\sqrt{D^k}} \right) + \frac{R^{(k)}_1(x)}{\sqrt{x}}, \label{New W} \\
R^{(k)}_1(x) &= \frac{1}{(k-1)!}\lim_{s\to 1}\frac{\mathrm{d}^{k-1}}{\mathrm{d}s^{k-1}}\left((s-1)^{k}\Omega_\mathbb{F}^k(s)x^{s} \right).
\end{align}
The main goals of this paper are to establish a number field analogue of Ramanujan's identity \eqref{Id 220} and to derive an integral representation of \eqref{Theta reln for dedekind zeta power k-x by x^2} involving the generalized Riemann $\Xi$-function.

Now, we define the function $\tilde{Z}_{r_1, r_2}(x)$ which is a particular case of the Steen function. The Steen function $V(x| a_1, a_2, \ldots, a_n)$ is defined by the Mellin--Barnes integral
\begin{align*}
V(x| a_1,a_2,\ldots,a_n) :=
\frac{1}{2\pi i}\int\limits_{(c)}
\prod_{j=1}^{n}\Gamma(s+a_j)x^{-s} \, \mathrm{d}s, \quad |\Arg(x)|<\frac{\pi n}{2},
\end{align*}
where $(c)$ denotes the vertical contour from $c-i\infty$ to $c+i\infty$. Throughout, $\Arg(x)$ denotes the principal argument, with
$ -\pi<\Arg(x)\leq\pi.$
The contour is chosen so that all the poles of the gamma factors in the integrand lie on the same side of $(c)$.
The function $V(x| a_1, a_2, \ldots, a_n)$ is a particular case of the Meijer $G$-function; see \cite[p.~415, Definition~16.17]{OLBC2010}. For suitable choices of the parameters, it specializes to several familiar special functions like,
\begin{align}
V(x \mid 0) &= e^{-x}, \quad c > 0, \label{Exp fn formula} \\
V(x\mid a,b) &= 2x^{\frac{a+b}{2}}
\mathrm{K}_{a-b}\left(2\sqrt{x}\right),
\quad c>\max\{-a,-b\}, \label{Bessel fn formula}
\end{align}
where $\mathrm{K}_{\nu}$ denotes the modified Bessel function of the second kind of order $\nu$. For further properties of the Steen function, we refer the reader to \cite{KT, Steen}. 
We next consider a function closely related to the Steen function, originally studied by Koshliakov \cite[p.~122]{KoshI} and subsequently employed by Dixit, Gupta, and Vatwani \cite[p.~5, Equation~(2.2)]{DGV2022}.
For $c=\Re(s)>0$, define
\begin{align}\label{Defn of Z}
\tilde{Z}_{r_1, r_2}(x) := \frac{1}{2\pi i}\int\limits_{(c)}\Gamma^{r_1}\left(\frac{s}{2}\right)\Gamma^{r_2}(s)x^{-s} \text{d}s, \quad \text{where} \,\,\, |\Arg(x)| < \frac{\pi d}{4}.
\end{align}
In particular, we have
\begin{align}
\tilde{Z}_{0, r_2}(x) = V(x| \bar{0}_{r_2}), \qquad  
\tilde{Z}_{r_1, 0}(x) = 2V(x^2| \bar{0}_{r_1}), \label{Z value real}
\end{align}
where $\bar{0}_m$ denotes the $m$-tuple $(0,\ldots,0)$. 

The main results of this paper are twofold. First, we establish a number-field analogue of identity \eqref{Id 220}. We remark that the function $\lambda(x)$ in \eqref{Series Lambda} admits a series representation valid only for $|x|<1$, whereas its integral representation \cite[p.~201, formula~5.74]{Oberhettinger},
\begin{align*}
  \lambda(x) = -\frac{1}{2\pi i}\int\limits_{(c)}
  \frac{\pi\,\zeta(1-s)}{\sin(\pi s)}\,x^{-s}\,\mathrm{d}s, \quad c>1,
\end{align*}
extends analytically to $\operatorname{Re}(x)>0$. This
motivates defining $\lambda_{\mathbb{F}}(x)$ in the number-field setting via 
\begin{align}\label{Generalized Lambda}
  \lambda_{\mathbb{F}}(x)
  := \frac{1}{2\pi i}\int\limits_{(1+\delta)}
  \frac{\zeta_\mathbb{F}(1-s)}{\sin^{d}(\pi s)}\,x^{-s}\,\mathrm{d}s,
  \quad 0<\delta<1,
\end{align}
valid for all $\Re(x)>0$. A series expansion of $\lambda_\mathbb{F}(x)$ has been obtained via residues in
Lemma~\ref{Series defn of Lambda_F}.
The absolute convergence of \eqref{Generalized Lambda} follows from
\begin{equation*}
  \left|\frac{\zeta_\mathbb{F}(1-s)}{\sin^{d}(\pi s)}\right|
  \ll |t|^{A}\,e^{-\frac{d\pi|t|}{2}}, \quad |t|\to\infty,
\end{equation*}
where the exponential decay of $|\sin(\pi s)|^{-d}$ dominates the
polynomial growth of $|\zeta_\mathbb{F}(1-s)|$. Moreover, since
$\mathtt{a}_{\mathbb{F}}(m)\ll m^{\varepsilon}, \epsilon > 0$, one has
\begin{equation*}
  \left|\mathtt{a}_{\mathbb{F}}(m)\,\lambda_{\mathbb{F}}(mx)\right|
  \ll m^{\varepsilon-c}\,x^{-c},
  \quad c\in(1,2),\;\varepsilon\in(0,c-1),
\end{equation*}
so the series $\sum_{m=1}^{\infty}\mathtt{a}_{\mathbb{F}}(m)
\lambda_{\mathbb{F}}(mx)$ converges absolutely for
$\operatorname{Re}(x)>0$.

Second, we extend the transformation formula for
$\zeta_{\mathbb{F}}^{k}(s)$, $k\in\mathbb{N}$, expressing its common
value as an integral involving the generalized Riemann $\Xi$-function.
The main results are stated in the next section.

\section{Main Results}
\begin{theorem}\label{Theorem Number field analogue Id 220}
Let $\mathbb{F}$ be a number field of degree $d$, $\mathtt{a}_{\mathbb{F}}(m)$, $\Xi_\mathbb{F}(t)$ and $\lambda_{\mathbb{F}}(x)$ as defined in \eqref{Dedekind zeta}, \eqref{Xi_F(t)} and \eqref{Generalized Lambda}, respectively. For $\Re(x)>0$, define
\begin{align}\label{G(x) for no field}
G_{\mathbb{F}}(x) &= \sqrt{x}\left\{\sum_{m=1}^{\infty}\mathtt{a}_{\mathbb{F}}(m)\lambda_{\mathbb{F}}(mx) - \mathcal{R}^{(r_2+2)}_1\left(\frac{1}{x}\right)\right\},
\end{align}
where
\begin{align*}
\mathcal{R}^{(r_2+2)}_1(x) &= \frac{1}{(r_2+1)!}\lim_{s\to 1}\frac{\mathrm{d}^{r_2+1}}{\mathrm{d}s^{r_2+1}}\left((s-1)^{r_2+2}\frac{\zeta_\mathbb{F}(s)\zeta_\mathbb{F}(1-s)}{\sin^{d}(\pi s)}x^s\right).
\end{align*}
Then, for any positive real numbers $\alpha$ and $\beta$ satisfying $\alpha\beta=1$, we have the transformation formula
\begin{align}\label{Number field analogue of transformation formula}
&G_{\mathbb{F}}(\alpha) = G_{\mathbb{F}}(\beta) \notag \\
&= \frac{2^{5-5r_1-r_2}}{\sqrt{D}\pi^{\frac{3r_1}{2}+r_2+1}}\int\limits_{0}^{\infty}\bigg|\Xi_\mathbb{F}\left(\frac{t}{2}\right)\Gamma^{r_1}\left(\frac{-1+it}{4}\right)\Gamma^{r_2}\left(\frac{-1+it}{2}\right)\bigg|^2\frac{\cos\left(\frac{t}{2}\log\alpha\right)}{(1+t^2)^{2-r_1-r_2}}\,\mathrm{d}t.
\end{align}
\end{theorem}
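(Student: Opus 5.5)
The proof goes through a Mellin--Barnes representation of $G_{\mathbb F}(x)$, followed by a shift to the critical line and the functional equation.

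\textbf{Step 1: Mellin--Barnes form of the series.} Insert \eqref{Generalized Lambda} into the series and interchange sum and integral; this is justified by absolute convergence on $\Re(s)=1+\delta$. It gives
\begin{align*}
\sum_{m=1}^{\infty}\mathtt{a}_{\mathbb F}(m)\lambda_{\mathbb F}(mx)=\frac{1}{2\pi i}\int\limits_{(1+\delta)}\frac{\zeta_{\mathbb F}(s)\zeta_{\mathbb F}(1-s)}{\sin^{d}(\pi s)}x^{-s}\,\mathrm{d}s .
\end{align*}

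\textbf{Step 2: shift to the critical line.} Move the line of integration to $\Re(s)=\tfrac12$. The only pole crossed is at $s=1$. There $\sin^{-d}(\pi s)$ has a pole of order $d$. The factor $\zeta_{\mathbb F}(1-s)$ vanishes at $s=1$ to order $r_1+r_2-1$, since the order of vanishing of $\zeta_{\mathbb F}$ at $0$ is $r_1+r_2-1$. The factor $\zeta_{\mathbb F}(s)$ contributes a simple pole. The net order is therefore $d-(r_1+r_2-1)+1=r_2+2$, which matches the definition of $\mathcal R^{(r_2+2)}_1$.

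The residue of the integrand $(\cdots)x^{-s}$ at $s=1$ is exactly $\mathcal R^{(r_2+2)}_1(1/x)$. Hence
\begin{align*}
G_{\mathbb F}(x)=\frac{\sqrt{x}}{2\pi i}\int\limits_{(1/2)}\frac{\zeta_{\mathbb F}(s)\zeta_{\mathbb F}(1-s)}{\sin^{d}(\pi s)}x^{-s}\,\mathrm{d}s .
\end{align*}
The horizontal segments vanish because of the factor $e^{-d\pi|t|/2}$ coming from $\sin^{-d}$.

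\textbf{Step 3: rewrite the integrand via $\Xi_{\mathbb F}$.} Put $s=\tfrac12+\tfrac{it}{2}$. Then $\sqrt{x}\,x^{-s}=x^{-it/2}$, and $\zeta_{\mathbb F}(s)\zeta_{\mathbb F}(1-s)=|\zeta_{\mathbb F}(\tfrac12+\tfrac{it}{2})|^2$. Using \eqref{Xi_F(t)}, this equals
\begin{align*}
\frac{\Xi_{\mathbb F}(t/2)^2\,(4/(1+t^2))^2\,(4^{r_2}\pi^d/D)^{1/2}}{|\Gamma(\frac14+\frac{it}{4})|^{2r_1}\,|\Gamma(\frac12+\frac{it}{2})|^{2r_2}} .
\end{align*}

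Next convert $\sin^{-d}(\pi s)$ with the reflection formula and the duplication formula, using $d=r_1+2r_2$:
\begin{itemize}
\item For each real embedding, write $\pi/\sin(\pi s)=\Gamma(s)\Gamma(1-s)$ and duplicate so that $\Gamma(\tfrac s2)\Gamma(\tfrac{1-s}{2})$-type factors cancel against $|\Gamma(\tfrac14+\tfrac{it}{4})|^{2}$. This leaves $|\Gamma(\tfrac{-1+it}{4})|^2$ times elementary factors.
\item For each complex pair, $\sin^{-2}(\pi s)$ gives $|\Gamma(\tfrac12+\tfrac{it}{2})\Gamma(\tfrac12-\tfrac{it}{2})|^2/\pi^2$. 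After cancellation and the shift $\Gamma(\tfrac{1+it}{2})=\tfrac{-1+it}{2}\,\Gamma(\tfrac{-1+it}{2})$, this yields $|\Gamma(\tfrac{-1+it}{2})|^2$ and a power of $(1+t^2)$.
\end{itemize}
The identities $\Gamma(\tfrac{3+it}{4})=\tfrac{-1+it}{4}\,\Gamma(\tfrac{-1+it}{4})$ and $|\tfrac{-1+it}{4}|^2=(1+t^2)/16$ produce the exponent $2-r_1-r_2$ on $(1+t^2)$ and the powers of $2$ and $\pi$.

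\textbf{Step 4: symmetrize.} The resulting integrand is even in $t$ apart from $x^{-it/2}$. Pairing $t$ with $-t$ turns $x^{-it/2}$ into $\cos(\tfrac t2\log x)$, which is invariant under $x\mapsto 1/x$. This gives both $G_{\mathbb F}(\alpha)=G_{\mathbb F}(\beta)$ and the integral representation \eqref{Number field analogue of transformation formula}.

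\textbf{Main obstacle.} Step 3 is the delicate part: the gamma-function bookkeeping that produces exactly $2^{5-5r_1-r_2}\pi^{-3r_1/2-r_2-1}D^{-1/2}$. One simplification is to treat the real and complex embeddings separately, as a product over a single real factor and a single complex factor. Then verify the real case against \eqref{Id 220} with $r_1=1$, $r_2=0$; the constants should reduce to Ramanujan's $-\pi^{-3/2}$ after accounting for the normalization of $\lambda_{\mathbb F}$.

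A secondary check is the residue order in Step 2. At $s=1$, one must confirm via the functional equation that $\zeta_{\mathbb F}(1-s)$ has a zero of order $r_1+r_2-1$, so the pole order is indeed $r_2+2$.
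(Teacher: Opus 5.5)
Your proposal is essentially the paper's proof run in the opposite direction. The paper starts from the $\Xi_{\mathbb F}$-integral and turns it into $\int_{(1/2)}\zeta_{\mathbb F}(s)\zeta_{\mathbb F}(1-s)\sin^{-d}(\pi s)\,y^{s}\,\mathrm{d}s$. For this it uses the Titchmarsh-type formula \eqref{Special int} with an auxiliary $\phi$ built from $\sqrt{\xi_{\mathbb F}}$, together with duplication, reflection and the asymmetric functional equation \eqref{Asymm fnal eqn for Dedekind zeta}. It then shifts to $\Re(s)=1+\delta$ and collects the order-$(r_2+2)$ residue at $s=1$, exactly as in your Steps 1--2. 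Your direct evaluation of the critical-line integrand as $|\zeta_{\mathbb F}(\frac12+\frac{it}{2})|^2\cosh^{-d}(\frac{\pi t}{2})$ is a slightly cleaner way of making the same identification, since it bypasses the auxiliary $\phi$.

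There is one slip in Step 3. $\Xi_{\mathbb F}(t/2)$ carries the factor $-(1+t^2)/8$, so the factor must be $(8/(1+t^2))^2$, not $(4/(1+t^2))^2$. With this correction your bookkeeping gives
\begin{align*}
\frac{|\zeta_{\mathbb F}(\frac12+\frac{it}{2})|^2}{\cosh^{d}(\frac{\pi t}{2})}=\frac{2^{6-5r_1-r_2}}{\sqrt{D}\,\pi^{\frac{3r_1}{2}+r_2}}\cdot\frac{\bigl|\Xi_{\mathbb F}(\frac{t}{2})\,\Gamma^{r_1}(\frac{-1+it}{4})\,\Gamma^{r_2}(\frac{-1+it}{2})\bigr|^2}{(1+t^2)^{2-r_1-r_2}} .
\end{align*}
The substitution $\mathrm{d}s=\frac{i}{2}\,\mathrm{d}t$ contributes the prefactor $\frac{1}{2\pi i}\cdot\frac{i}{2}=\frac{1}{4\pi}$, and folding $\int_{-\infty}^{\infty}$ into $2\int_0^{\infty}$ then produces exactly the constant $2^{5-5r_1-r_2}/(\sqrt{D}\,\pi^{\frac{3r_1}{2}+r_2+1})$. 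As written, your factor $16$ would make the final constant too small by a factor of $4$.
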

For $\mathbb{F}=\mathbb{Q}$, the above result recovers Ramanujan's original identity, leading to the following corollary.
\begin{corollary}\label{Recovery of main id}
When $\mathbb{F} = \mathbb{Q}$, Theorem \ref{Theorem Number field analogue Id 220} reduces to the identity \eqref{Id 220}.
\end{corollary}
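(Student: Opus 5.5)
We prove the corollary by specializing every ingredient of Theorem~\ref{Theorem Number field analogue Id 220} to $\mathbb{F}=\mathbb{Q}$, where $d=r_1=1$, $r_2=0$, $D=1$, $\mathtt{a}_{\mathbb{Q}}(m)=1$ and $\zeta_{\mathbb{Q}}=\zeta$. We will find $G_{\mathbb{Q}}(x)=-\frac{1}{\pi}G(x)$ and check that the right-hand side of \eqref{Number field analogue of transformation formula} is $-\frac{1}{\pi}$ times the right-hand side of \eqref{Id 220}. The identity \eqref{Id 220} then follows after multiplying through by $-\pi$.

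\emph{Step 1: the function $\Xi_{\mathbb{Q}}$.} Since $\frac{D}{4^{r_2}\pi^d}=\frac{1}{\pi}$, the definition \eqref{Xi_F(t)} reduces to
\begin{align*}
\Xi_{\mathbb{Q}}(t)=\left(-\frac18-\frac{t^2}{2}\right)\pi^{-\frac14-\frac{it}{2}}\Gamma\left(\frac14+\frac{it}{2}\right)\zeta\left(\frac12+it\right).
\end{align*}
This is exactly the Riemann $\Xi$-function of the introduction; the factor $\Gamma\left(\frac12+it\right)$ there is really $\Gamma^{r_2}=1$, and the reduction is consistent with the classical $\xi(s)$.

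\emph{Step 2: the function $\lambda_{\mathbb{Q}}$.} With $d=1$, the definition \eqref{Generalized Lambda} becomes
\begin{align*}
\lambda_{\mathbb{Q}}(x)=\frac{1}{2\pi i}\int_{(1+\delta)}\frac{\zeta(1-s)}{\sin(\pi s)}x^{-s}\,\mathrm{d}s.
\end{align*}
Comparing with Oberhettinger's representation of $\lambda(x)$ quoted in the introduction (valid for any $c>1$, so $c=1+\delta$ is allowed) gives $\lambda_{\mathbb{Q}}(x)=-\frac{1}{\pi}\lambda(x)$.

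\emph{Step 3: the residual term.} This is the main computation, where signs and constants must be tracked carefully. For $r_2=0$ the term is
\begin{align*}
\mathcal{R}^{(2)}_1(x)=\lim_{s\to1}\frac{\mathrm{d}}{\mathrm{d}s}\left((s-1)^2\frac{\zeta(s)\zeta(1-s)}{\sin(\pi s)}x^s\right).
\end{align*}
Put $u=s-1$ and $L=\log 2\pi$, and use the expansions
\begin{align*}
(s-1)\zeta(s)&=1+\gamma u+O(u^2),\\
\zeta(1-s)&=-\tfrac12+\tfrac{L}{2}u+O(u^2),\\
\frac{s-1}{\sin(\pi s)}&=-\frac1\pi+O(u^2),\\
x^s&=x\left(1+u\log x+O(u^2)\right).
\end{align*}
The second line uses $\zeta(0)=-\tfrac12$ and $\zeta'(0)=-\tfrac12\log 2\pi$. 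Extracting the coefficient of $u$ in the product gives
\begin{align*}
\mathcal{R}^{(2)}_1(x)=\frac{x}{2\pi}\left(\gamma+\log x-\log 2\pi\right),
\qquad
\mathcal{R}^{(2)}_1\left(\frac1x\right)=\frac{\gamma-\log(2\pi x)}{2\pi x}.
\end{align*}
Combining this with Step 2 in \eqref{G(x) for no field} yields
\begin{align*}
G_{\mathbb{Q}}(x)=-\frac1\pi\sqrt{x}\left\{\sum_{m\ge1}\lambda(mx)+\frac{\gamma-\log(2\pi x)}{2x}\right\}=-\frac1\pi G(x).
\end{align*}

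\emph{Step 4: the right-hand side.} The constant in \eqref{Number field analogue of transformation formula} becomes $\frac{2^{0}}{\pi^{3/2+1}}=\pi^{-5/2}$. The exponent $2-r_1-r_2$ equals $1$, the factor $\Gamma^{r_2}$ disappears, and $\Gamma^{r_1}\left(\frac{-1+it}{4}\right)=\Gamma\left(\frac{-1+it}{4}\right)$. Hence the theorem reads
\begin{align*}
-\frac1\pi G(\alpha)=-\frac1\pi G(\beta)=\pi^{-5/2}\int_0^\infty\left|\Xi\left(\frac t2\right)\Gamma\left(\frac{-1+it}{4}\right)\right|^2\frac{\cos\left(\frac t2\log\alpha\right)}{1+t^2}\,\mathrm{d}t.
\end{align*}
Multiplying by $-\pi$ gives precisely \eqref{Id 220}, with the constant $-\pi^{-3/2}$.
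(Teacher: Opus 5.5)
Your proposal is correct and follows essentially the same route as the paper: you specialize to $\mathbb{F}=\mathbb{Q}$, use $\mathtt{a}_{\mathbb{Q}}(m)=1$ and $\lambda_{\mathbb{Q}}=-\lambda/\pi$, and extract $\mathcal{R}^{(2)}_1(x)=\frac{x}{2\pi}\bigl(\gamma-\log\frac{2\pi}{x}\bigr)$ from the same four Laurent expansions, while your Steps 1 and 4 make explicit what the paper leaves to the reader (identifying $\Xi_{\mathbb{Q}}$ with the classical $\Xi$, rightly flagging the stray factor $\Gamma(\frac12+it)$ in the introduction's definition, and checking that the constant becomes $\pi^{-5/2}$). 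One small correction: Oberhettinger's representation of $\lambda$ holds only for $1<c<2$, not for every $c>1$, because $\zeta(1-s)/\sin(\pi s)$ has poles at $s=2,4,6,\dots$ (where $\zeta(1-s)\neq 0$); this is harmless here since $c=1+\delta\in(1,2)$.
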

The following theorem extends the transformation formula \eqref{Theta reln for dedekind zeta power k-x by x^2} by expressing its common value as an integral involving the generalized Riemann $\Xi$-function defined in \eqref{Xi_F(t)}.
\begin{theorem}\label{Theorem Number field analogue Id 2nd paper}
Let $\mathbb{F}$ be a number field of degree $d$ and let $k\geq 1$ be an integer. Let $\Omega_\mathbb{F}(s)$, $\mathtt{a}_{\mathbb{F}, k}(n)$, $\mathcal{H}_{\mathbb{F},k}(x)$ and $\tilde{Z}_{r_1, r_2}(x)$ be defined as in \eqref{Omega_F of fnal eqn}, \eqref{Coeff a_k}, \eqref{New W} and \eqref{Defn of Z}, respectively. For positive real numbers $\alpha$ and $\beta$ such that $\alpha\beta=1$, we have
\begin{align}\label{Extended trans formula}
\mathcal{H}_{\mathbb{F}, k}(\alpha) = \mathcal{H}_{\mathbb{F}, k}(\beta) = \frac{(-8)^k}{2\pi}\int\limits_{0}^{\infty}\frac{\Xi_{\mathbb{F}}^{k}\left(\frac{t}{2}\right)}{\left(1 + t^2\right)^{k}} \cos\left(\frac{t}{2}\log\alpha\right)\mathrm{d}t.
\end{align}
\end{theorem}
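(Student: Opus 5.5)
The first equality $\mathcal{H}_{\mathbb{F},k}(\alpha)=\mathcal{H}_{\mathbb{F},k}(\beta)$ is \eqref{Theta reln for dedekind zeta power k-x by x^2}, which was already derived from \cite[Theorem~2.1]{BM2}. So only the integral representation needs proof. The plan is to turn $\mathcal{H}_{\mathbb{F},k}(x)$ into a Mellin--Barnes integral on the critical line and then identify the integrand with $\Xi_\mathbb{F}^k$.

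\textbf{Step 1 (Mellin--Barnes form of the series).} Insert the definition \eqref{Defn of Z} of $\tilde{Z}_{kr_1,kr_2}$ into \eqref{New W} with $c>1$. Interchanging sum and integral is justified by absolute convergence. Using \eqref{Coeff a_k} and \eqref{Omega_F of fnal eqn}, the scaling $2^{kr_2}\pi^{kd/2}/\sqrt{D^k}$ is exactly what is needed to get
\begin{align*}
\sqrt{x}\sum_{n=1}^{\infty}\mathtt{a}_{\mathbb{F}, k}(n)\tilde{Z}_{kr_1, kr_2}\left(\frac{2^{kr_2}\pi^{kd/2} nx}{\sqrt{D^k}} \right)=\frac{\sqrt{x}}{2\pi i}\int\limits_{(c)}\Omega_\mathbb{F}^k(s)\,x^{-s}\,\mathrm{d}s .
\end{align*}

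\textbf{Step 2 (shift to the critical line).} Move the line of integration from $\Re(s)=c$ to $\Re(s)=\tfrac12$. The only pole crossed is the order-$k$ pole of $\Omega_\mathbb{F}^k(s)$ at $s=1$. The horizontal segments vanish because the gamma factors decay exponentially while $\zeta_\mathbb{F}^k$ grows at most polynomially in vertical strips (convexity/Phragmén--Lindelöf). I would check carefully that the residue at $s=1$ of $\Omega_\mathbb{F}^k(s)x^{-s}$, multiplied by $\sqrt{x}$, is exactly $-R^{(k)}_1(x)/\sqrt{x}$ with $R^{(k)}_1$ as defined, since the sign and the $x^{\pm s}$ convention have to match. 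If they do, the residue cancels the $R^{(k)}_1$ term in \eqref{New W}, leaving
\[
\mathcal{H}_{\mathbb{F},k}(x)=\frac{\sqrt{x}}{2\pi i}\int\limits_{(1/2)}\Omega_\mathbb{F}^k(s)x^{-s}\,\mathrm{d}s .
\]
If the sign convention is off, I would instead shift to $\Re(s)=\tfrac12$ after rewriting via the functional equation; in that approach $R_0$ arises from the pole at $0$. This bookkeeping is the main obstacle.

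\textbf{Step 3 (identify $\Xi_\mathbb{F}$).} Put $s=\tfrac12+\tfrac{it}{2}$, so $\mathrm{d}s=\tfrac{i}{2}\mathrm{d}t$ and $\sqrt{x}\,x^{-s}=e^{-\frac{it}{2}\log x}$. From \eqref{xi function},
\[
\Omega_\mathbb{F}(s)=\frac{2\xi_\mathbb{F}(s)}{s(s-1)}=\frac{-8\,\Xi_\mathbb{F}(t/2)}{1+t^2},
\]
because $s(s-1)=-\tfrac14(1+t^2)$. Hence
\[
\mathcal{H}_{\mathbb{F},k}(x)=\frac{(-8)^k}{4\pi}\int_{-\infty}^{\infty}\frac{\Xi_\mathbb{F}^k(t/2)}{(1+t^2)^k}e^{-\frac{it}{2}\log x}\,\mathrm{d}t.
\]
Since $\Xi_\mathbb{F}$ is real and even on $\mathbb{R}$, the integrand's odd (sine) part integrates to zero and the even part doubles. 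This gives \eqref{Extended trans formula} with $x=\alpha$. The resulting integral is even in $\log\alpha$, so it also gives the second equality, which is consistent with $\alpha\beta=1$.
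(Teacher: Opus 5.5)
\textbf{There is a genuine gap, and it sits exactly where you flagged it in Step 2. No choice of convention closes it, because the statement as written is false.}

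Moving the line of integration for $\Omega_{\mathbb{F}}^k(s)x^{-s}$ leftward from $\Re(s)=c$ to $\Re(s)=\tfrac12$ gives $\frac{1}{2\pi i}\int_{(c)}=\frac{1}{2\pi i}\int_{(1/2)}+\operatorname{Res}_{s=1}$. The residue is $\operatorname{Res}_{s=1}\Omega_{\mathbb{F}}^k(s)x^{-s}=R_1^{(k)}(1/x)$. Hence your Steps 1 and 2 actually give
\begin{align*}
\mathcal{H}_{\mathbb{F},k}(x)=\frac{\sqrt{x}}{2\pi i}\int\limits_{(1/2)}\Omega_{\mathbb{F}}^k(s)x^{-s}\,\mathrm{d}s+\sqrt{x}\,R_1^{(k)}\left(\frac{1}{x}\right)+\frac{R_1^{(k)}(x)}{\sqrt{x}}.
\end{align*}
The residue adds to the $R_1^{(k)}$-term of \eqref{New W} rather than cancelling it. For $k=1$ the two terms are $A/\sqrt{x}$ and $A\sqrt{x}$, with $A=\sqrt{D}H_{\mathbb{F}}/(2\pi)^{r_2}>0$.

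Your fallback (rewriting through the functional equation and the pole at $s=0$) cannot change this. The critical-line integral is a fixed quantity, and you have computed its difference from $\mathcal{H}_{\mathbb{F},k}(x)$ exactly. Your Step 3 is correct, so combined with it your argument proves
\begin{align*}
\mathcal{H}_{\mathbb{F},k}(\alpha)-\frac{R_1^{(k)}(\alpha)}{\sqrt{\alpha}}-\frac{R_1^{(k)}(\beta)}{\sqrt{\beta}}=\frac{(-8)^k}{2\pi}\int\limits_0^\infty\frac{\Xi_{\mathbb{F}}^k\left(\frac{t}{2}\right)}{(1+t^2)^k}\cos\left(\frac{t}{2}\log\alpha\right)\mathrm{d}t .
\end{align*}
The subtracted terms are not identically zero: they grow like $\frac{A^k}{(k-1)!}\sqrt{\alpha}(\log\alpha)^{k-1}$ as $\alpha\to\infty$. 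So the theorem as stated fails. The right-hand side of \eqref{Extended trans formula} tends to $0$ as $\alpha\to\infty$ by the Riemann--Lebesgue lemma, while $\mathcal{H}_{\mathbb{F},k}(\alpha)\to\infty$.

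A concrete check: for $\mathbb{F}=\mathbb{Q}$, $k=1$, $\alpha=1$, one has $\mathcal{H}_{\mathbb{Q},1}(1)=1+2\sum_{n\geq 1}e^{-\pi n^2}\approx 1.086$. Titchmarsh's formula (2.16.1) instead gives $-\frac{4}{\pi}\int_0^\infty\frac{\Xi(t/2)}{1+t^2}\,\mathrm{d}t=2\sum_{n\geq 1}e^{-\pi n^2}-1\approx -0.914$. The corrected identity reduces to Titchmarsh's formula for $k=1$. For $k=2$ it reduces to \eqref{Extended Koshliakov formula} exactly as written in the introduction, since there $R_1^{(2)}(\beta)/\sqrt{\beta}=(\gamma-\log(4\pi\alpha))/\sqrt{\alpha}$.

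\textbf{Comparison with the paper.} The paper performs the same contour shift in the opposite direction. It starts from the integral via \eqref{Special int}, moves the line to $\Re(s)=1+\delta$, and identifies that line integral with the series $S_{\mathbb{F},k}$. This correctly yields \eqref{One side with beta}, which is equivalent to the corrected identity above. Its final ``shuffle the residual terms'' step then replaces $-\sqrt{\beta}\,R_1^{(k)}(1/\beta)$ by $+R_1^{(k)}(\beta)/\sqrt{\beta}$. That is precisely the identification your Step 2 needs, and it fails for the same reason. Your suspicion was well founded: the repair belongs in the statement (replace $\mathcal{H}_{\mathbb{F},k}$ by the left-hand side displayed above), not in the residue bookkeeping.
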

For $k=1$, Theorem~\ref{Theorem Number field analogue Id 2nd paper} gives the following number-field extension of the Jacobi theta transformation.
\begin{corollary}\label{Case k=1 mt2}
Let $\mathtt{a}_{\mathbb{F}}(n)$ and $H_\mathbb{F}$ be defined as in \eqref{Dedekind zeta} and \eqref{Class number formula} respectively. Let $\alpha,\beta>0$ satisfy $\alpha\beta=1$. Then,
\begin{align*}
\mathcal{H}_{\mathbb{F}, 1}(\alpha) = \mathcal{H}_{\mathbb{F}, 1}(\beta) = -\frac{4}{\pi}\int\limits_{0}^{\infty}\frac{\Xi_{\mathbb{F}}\left(\frac{t}{2}\right)}{\left(1 + t^2\right)} \cos\left(\frac{t}{2}\log\alpha\right)\mathrm{d}t,
\end{align*}
where
\begin{align*}
\mathcal{H}_{\mathbb{F}, 1}(x) &= \sqrt{x}\sum_{n=1}^{\infty}\mathtt{a}_{\mathbb{F}}(n)\tilde{Z}_{r_1, r_2}\left(\frac{2^{r_2}\pi^{d/2} nx}{\sqrt{D}} \right) + \frac{\sqrt{Dx}H_\mathbb{F}}{(2\pi)^{r_2}}.
\end{align*}
\end{corollary}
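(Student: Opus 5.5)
The plan is to specialize Theorem~\ref{Theorem Number field analogue Id 2nd paper} to $k=1$ and then simplify the two sides: the constant in front of the integral, and the residual term $R^{(1)}_1(x)/\sqrt{x}$.

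\emph{The integral.} For $k=1$ the prefactor in \eqref{Extended trans formula} becomes
$$\frac{(-8)^1}{2\pi}=-\frac{4}{\pi},$$
and the kernel becomes $\Xi_{\mathbb F}(t/2)/(1+t^2)$. This is exactly the integral in the corollary.

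\emph{The series.} For $k=1$ we have $\mathtt{a}_{\mathbb F,1}(n)=\mathtt{a}_{\mathbb F}(n)$ by \eqref{Coeff a_k}. The argument of $\tilde Z_{kr_1,kr_2}$ reduces to $2^{r_2}\pi^{d/2}nx/\sqrt D$. So the series part of \eqref{New W} matches the stated $\mathcal H_{\mathbb F,1}(x)$.

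\emph{The residual term.} With $k=1$ there are no derivatives, so
$$R^{(1)}_1(x)=\lim_{s\to1}(s-1)\,\Omega_{\mathbb F}(s)\,x^s.$$
Since $\Omega_{\mathbb F}$ has a simple pole at $s=1$ coming only from $\zeta_{\mathbb F}$, the definition \eqref{Omega_F of fnal eqn} and the class number formula \eqref{Class number formula} give
$$R^{(1)}_1(x)=\left(\frac{D}{4^{r_2}\pi^d}\right)^{1/2}\Gamma^{r_1}\!\left(\tfrac12\right)\Gamma^{r_2}(1)\,H_{\mathbb F}\,x .$$
We now use $\Gamma(1/2)^{r_1}=\pi^{r_1/2}$ and $\pi^{d/2}=\pi^{r_1/2}\pi^{r_2}$. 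The powers of $\pi$ then reduce to $\pi^{-r_2}$, and together with $4^{-r_2/2}=2^{-r_2}$ this yields
$$R^{(1)}_1(x)=\frac{\sqrt D\,H_{\mathbb F}\,x}{(2\pi)^{r_2}}.$$
Dividing by $\sqrt x$ gives $\dfrac{\sqrt{Dx}\,H_{\mathbb F}}{(2\pi)^{r_2}}$, which is the stated constant term.

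The main point needing care is this residue computation, in particular the bookkeeping of the powers of $2$ and $\pi$. Everything else follows by direct substitution into Theorem~\ref{Theorem Number field analogue Id 2nd paper}.
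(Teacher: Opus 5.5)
Your reduction is the paper's own proof: set $k=1$ in Theorem~\ref{Theorem Number field analogue Id 2nd paper}, note $(-8)/(2\pi)=-4/\pi$, and evaluate $R^{(1)}_1(x)=\sqrt{D}\,H_{\mathbb F}\,x/(2\pi)^{r_2}$. That arithmetic is correct. The gap is the step you take on trust: the integral representation in Theorem~\ref{Theorem Number field analogue Id 2nd paper} is false as stated, and therefore so is the second equality of this corollary.

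You can see this by doing the $k=1$ case directly. By \eqref{xi function}, $\Xi_{\mathbb F}(u)=-\frac12\left(\frac14+u^2\right)\Omega_{\mathbb F}\left(\frac12+iu\right)$. Put $s=\frac12+iu$ and $y=e^{2z}$, then move the line to $\Re(s)=1+\delta$. With $S_{\mathbb F,1}$ as in the proof of Theorem~\ref{Theorem Number field analogue Id 2nd paper}, this gives
\begin{align*}
-\frac{4}{\pi}\int_0^\infty\frac{\Xi_{\mathbb F}\left(\frac t2\right)}{1+t^2}\cos(zt)\,\mathrm{d}t
=\frac{1}{\sqrt y}\cdot\frac{1}{2\pi i}\int_{\left(\frac12\right)}\Omega_{\mathbb F}(s)\,y^s\,\mathrm{d}s
=\frac{1}{\sqrt y}\left(S_{\mathbb F,1}\left(y^{-2}\right)-\frac{\sqrt D\,H_{\mathbb F}}{(2\pi)^{r_2}}\,y\right).
\end{align*}
With $y=\alpha=1/\beta$, the integral equals $P(\beta)$, and by evenness also $P(\alpha)$, where $P(x)=\sqrt x\sum_{n\ge1}\mathtt{a}_{\mathbb F}(n)\tilde Z_{r_1,r_2}\bigl(2^{r_2}\pi^{d/2}nx/\sqrt D\bigr)-\frac{\sqrt D\,H_{\mathbb F}}{(2\pi)^{r_2}\sqrt x}$. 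The stated function satisfies $\mathcal H_{\mathbb F,1}(x)-P(x)=\frac{\sqrt D\,H_{\mathbb F}}{(2\pi)^{r_2}}\bigl(\sqrt x+1/\sqrt x\bigr)$. This difference is invariant under $x\mapsto 1/x$, so $\mathcal H_{\mathbb F,1}(\alpha)=\mathcal H_{\mathbb F,1}(\beta)$ does hold. But it is strictly positive, so $\mathcal H_{\mathbb F,1}(\alpha)$ is not the integral.

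The error enters at the last step of the proof of Theorem~\ref{Theorem Number field analogue Id 2nd paper} (``shuffle the residual terms''). The contour shift correctly gives the integral as $\sqrt\alpha\bigl(S_{\mathbb F,k}(\alpha^2)-R^{(k)}_1(1/\alpha)\bigr)$. Rewriting it as $\sqrt\alpha\bigl(S_{\mathbb F,k}(\alpha^2)+R^{(k)}_1(\alpha)/\alpha\bigr)$ keeps the $\alpha\leftrightarrow\beta$ symmetry but adds $R^{(k)}_1(\alpha)/\sqrt\alpha+\sqrt\alpha\,R^{(k)}_1(1/\alpha)$, which is not zero.

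A one-line test catches it. Take $\mathbb F=\mathbb Q$ and $\alpha=\beta=1$: the left side is $1+2\sum_{n\ge1}e^{-\pi n^2}\approx 1.086$. Hardy's formula (Titchmarsh, \S2.16) gives $-\frac4\pi\int_0^\infty\frac{\Xi_{\mathbb Q}(t/2)}{1+t^2}\,\mathrm{d}t=2\sum_{n\ge1}e^{-\pi n^2}-1\approx-0.914$. The discrepancy is exactly $2=\sqrt\alpha+\sqrt\beta$. The same slip is already in \eqref{Jacobi relation}, whose correct left side is $\sqrt x\bigl(2\sum_{n\ge1}e^{-\pi n^2x^2}-1/x\bigr)$.

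So the delicate point is not the powers of $2$ and $\pi$ in the residue. It is which residual term accompanies the series once the integral is actually evaluated. The corollary becomes true, and follows from the direct computation above, only after $+\sqrt{Dx}\,H_{\mathbb F}/(2\pi)^{r_2}$ is replaced by $-\sqrt D\,H_{\mathbb F}/\bigl((2\pi)^{r_2}\sqrt x\bigr)$.
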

For $\mathbb{F} = \mathbb{Q}$, the following corollary provides the extended form of the Jacobi theta relation \eqref{Jacobi relation} due to Hardy.
\begin{corollary}\label{F=Q, k=1}
When $\mathbb{F}=\mathbb{Q}$, Corollary \textup{\ref{Case k=1 mt2}} leads to the identity \eqref{Jacobi relation}.
\end{corollary}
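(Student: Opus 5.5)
We specialize Corollary~\ref{Case k=1 mt2} to $\mathbb{F}=\mathbb{Q}$ and identify each ingredient with its classical counterpart. For $\mathbb{Q}$ we have $d=1$, $r_1=1$, $r_2=0$, $D=1$, $\mathtt{a}_{\mathbb{Q}}(n)=1$ for all $n$, and $\zeta_{\mathbb{Q}}=\zeta$. The class number formula \eqref{Class number formula} gives $h_{\mathbb{Q}}=1$, $R_{\mathbb{Q}}=1$, $w_{\mathbb{Q}}=2$, hence $H_{\mathbb{Q}}=\frac{2}{1}\cdot\frac{1}{2}=1$, which is the residue of $\zeta(s)$ at $s=1$. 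Consequently the non-series term $\frac{\sqrt{Dx}H_\mathbb{F}}{(2\pi)^{r_2}}$ becomes simply $\sqrt{x}$.

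Next we identify the kernel. By \eqref{Z value real} and \eqref{Exp fn formula},
\begin{align*}
\tilde{Z}_{1,0}(y)=2V(y^2\mid 0)=2e^{-y^2}.
\end{align*}
With $r_2=0$ and $d=1$, the argument $\frac{2^{r_2}\pi^{d/2}nx}{\sqrt{D}}$ equals $\sqrt{\pi}\,nx$. Hence $\tilde{Z}_{1,0}(\sqrt{\pi}nx)=2e^{-\pi n^2x^2}$, and
\begin{align*}
\mathcal{H}_{\mathbb{Q},1}(x)=\sqrt{x}\Big(1+2\sum_{n=1}^{\infty}e^{-\pi n^2x^2}\Big)=\mathcal{H}_1(x).
\end{align*}
One can check the kernel identity directly: the Mellin--Barnes integral $\frac{1}{2\pi i}\int_{(c)}\Gamma(s/2)y^{-s}\,\mathrm{d}s$ equals $2e^{-y^2}$ after the substitution $s\mapsto 2s$. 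This factor of $2$ is the only point needing care.

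Finally we identify $\Xi_{\mathbb{Q}}$ with $\Xi$. Putting $r_1=1$, $r_2=0$, $D=1$, $d=1$ in \eqref{Xi_F(t)} gives
\begin{align*}
\Xi_{\mathbb{Q}}(t)=\Big(-\tfrac18-\tfrac{t^2}{2}\Big)\pi^{-\frac14-\frac{it}{2}}\Gamma\Big(\tfrac14+\tfrac{it}{2}\Big)\zeta\Big(\tfrac12+it\Big).
\end{align*}
This agrees with Riemann's $\Xi(t)=\xi(\tfrac12+it)$, where $\xi(s)=\tfrac12 s(s-1)\pi^{-s/2}\Gamma(s/2)\zeta(s)$. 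The extra factor $\Gamma(\frac12+it)$ in the paper's displayed formula for $\Xi$ is apparently a typo; what matters is that both functions equal $\xi(\frac12+it)$ with the same $\xi$. Substituting these identifications into Corollary~\ref{Case k=1 mt2} gives exactly \eqref{Jacobi relation}, including the constant $-\frac{4}{\pi}$. No step is a genuine obstacle; the only delicate points are tracking the factor $2$ in $\tilde{Z}_{1,0}$ and reconciling the two normalizations of $\Xi$.
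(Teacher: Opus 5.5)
Your proposal is correct and follows the paper's own proof: specialize to $r_1=1$, $r_2=0$, $D=1$, $\mathtt{a}_{\mathbb{Q}}(n)=1$, $H_{\mathbb{Q}}=1$, and use \eqref{Z value real} with \eqref{Exp fn formula} to get $\tilde{Z}_{1,0}(\sqrt{\pi}\,nx)=2e^{-\pi n^2x^2}$, so that $\mathcal{H}_{\mathbb{Q},1}=\mathcal{H}_1$. Your observation that the factor $\Gamma(\frac12+it)$ in the introductory definition of $\Xi$ is a typo is also right.

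One caveat concerns the paper, not your reduction, which is purely formal. By Titchmarsh's formula $\int_0^\infty \frac{\Xi(t)}{t^2+1/4}\cos(xt)\,\mathrm{d}t=\frac{\pi}{2}\bigl(e^{x/2}-2e^{-x/2}\sum_{n\geq1}e^{-\pi n^2e^{-2x}}\bigr)$, the right-hand side of \eqref{Jacobi relation} actually equals $\sqrt{\alpha}\bigl(2\sum_{n\geq1}e^{-\pi n^2\alpha^2}-\frac{1}{\alpha}\bigr)$. At $\alpha=1$ this is about $-0.914$, whereas $\mathcal{H}_1(1)\approx 1.086$. So both \eqref{Jacobi relation} and Corollary \ref{Case k=1 mt2} as printed have the wrong non-series term: for $\mathbb{Q}$, $\sqrt{x}$ should be $-1/\sqrt{x}$. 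The slip occurs in the last step of the proof of Theorem \ref{Theorem Number field analogue Id 2nd paper}, where $-R^{(k)}_1(1/\beta)$ from \eqref{One side with beta} is replaced by $+R^{(k)}_1(\beta)/\beta$.
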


For $k=2$, Theorem \ref{Theorem Number field analogue Id 2nd paper} yields the following extension of the Ramanujan--Koshliakov identity \eqref{Extended Koshliakov formula} to the number fields.
\begin{corollary}\label{Gen Koshliakov Formula}
Let $\mathbb{F}$ be a number field, $H_\mathbb{F}$ be the residue defined in \eqref{Class number formula}, and let $\gamma_{\mathbb{F}}$ denote the Euler-Kronecker constant of $\mathbb{F}$. Then, for any positive real numbers $\alpha$ and $\beta$ satisfying $\alpha\beta=1$, we have
\begin{align}
\mathcal{H}_{\mathbb{F}, 2}(\alpha) = \mathcal{H}_{\mathbb{F}, 2}(\beta) = \frac{32}{\pi}\int\limits_{0}^{\infty}\frac{\Xi_{\mathbb{F}}^{2}\left(\frac{t}{2}\right)}{\left(1 + t^2\right)^{2}} \cos\left(\frac{t}{2}\log\alpha\right)\mathrm{d}t, \label{Extended number field RK formula}
\end{align}
where
\begin{align*}
\mathcal{H}_{\mathbb{F}, 2}(x) &= \sqrt{x}\sum_{n=1}^\infty \mathtt{a}_{\mathbb{F}, 2}(n)\tilde{Z}_{2r_1, 2r_2}\left(\frac{4^{r_2}\pi^{d} nx}{D} \right) \\
&- \frac{D H_{\mathbb F}^2\sqrt{x}}{(2\pi)^{2r_2}} \left[ 2\gamma_{\mathbb F} +\log\left(\frac{Dx}{\pi^d}\right) -d\gamma -2(r_1+r_2)\log 2\right].
\end{align*}
\end{corollary}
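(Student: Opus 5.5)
The plan is to specialize Theorem~\ref{Theorem Number field analogue Id 2nd paper} to $k=2$. The integral side is immediate, since $\frac{(-8)^2}{2\pi}=\frac{32}{\pi}$ and the integrand becomes $\Xi_{\mathbb F}^2(t/2)(1+t^2)^{-2}\cos(\frac t2\log\alpha)$. The series part of $\mathcal{H}_{\mathbb F,2}(x)$ in \eqref{New W} is already of the stated form, because $2^{2r_2}\pi^{d}/D=4^{r_2}\pi^d/D$. So everything reduces to evaluating the residual term $R_1^{(2)}(x)/\sqrt{x}$ in closed form.

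For this, expand $(s-1)\Omega_{\mathbb F}(s)=A+B(s-1)+O((s-1)^2)$ near $s=1$. The value $A$ follows from \eqref{Omega_F of fnal eqn}, the class number formula \eqref{Class number formula} and $\Gamma(1/2)=\sqrt\pi$, $\Gamma(1)=1$:
\[
A=\Big(\tfrac{D}{4^{r_2}\pi^d}\Big)^{1/2}\pi^{r_1/2}H_{\mathbb F}=\frac{\sqrt D\,H_{\mathbb F}}{(2\pi)^{r_2}}.
\]
This agrees with the constant in Corollary~\ref{Case k=1 mt2}, which is a useful check. The ratio $B/A$ is the logarithmic derivative of $(s-1)\Omega_{\mathbb F}(s)$ at $s=1$. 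Write $(s-1)\zeta_{\mathbb F}(s)=H_{\mathbb F}(1+\gamma_{\mathbb F}(s-1)+\cdots)$, which is the definition of the Euler--Kronecker constant. Using also $\psi(1/2)=-\gamma-2\log 2$ and $\psi(1)=-\gamma$, we get
\[
\frac{2B}{A}=\log\frac{D}{4^{r_2}\pi^d}+r_1\psi(\tfrac12)+2r_2\psi(1)+2\gamma_{\mathbb F}
=2\gamma_{\mathbb F}+\log\frac{D}{\pi^d}-d\gamma-2(r_1+r_2)\log 2 .
\]
Then
\[
R_1^{(2)}(x)=\lim_{s\to1}\frac{\mathrm d}{\mathrm ds}\big[(A+B(s-1))^2x^s\big]=A^2x\Big(\log x+\frac{2B}{A}\Big).
\]
Dividing by $\sqrt x$ and combining $\log x+\log(D/\pi^d)=\log(Dx/\pi^d)$ gives $\frac{DH_{\mathbb F}^2\sqrt x}{(2\pi)^{2r_2}}\big[2\gamma_{\mathbb F}+\log\frac{Dx}{\pi^d}-d\gamma-2(r_1+r_2)\log2\big]$. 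This is the bracket in the corollary.

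The main obstacle is the overall sign of this residual term. The stated formula carries a minus sign, whereas the naive reading of \eqref{New W} gives a plus. I would therefore recheck the passage from $R_0$ to $R_1$ directly for $k=2$, by computing the residue at $s=0$ from the functional equation. Since $\Omega_{\mathbb F}(s)=\Omega_{\mathbb F}(1-s)$, one has $s\Omega_{\mathbb F}(s)=-(A-Bs+\cdots)$, so $s^2\Omega_{\mathbb F}^2(s)=(A-Bs)^2+\cdots$.

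The key point is that for even $k$ the sign $(-1)^k$ coming from $s\Omega_{\mathbb F}(s)=-(1-s)\Omega_{\mathbb F}(1-s)$ differs from the case $k=1$. So the identity $R_0=-R_1/\sqrt x$ has to be re-derived with the factor $(-1)^k$ kept explicit. I would then redo the substitution $x\mapsto x^2$ and the multiplication by $1/\sqrt x$ that lead to \eqref{Theta reln for dedekind zeta power k-x by x^2}. With the signs tracked consistently through these steps, the residual term should appear with the sign stated in the corollary, and inserting it into Theorem~\ref{Theorem Number field analogue Id 2nd paper} with $k=2$ completes the proof.
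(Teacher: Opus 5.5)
Your $k=2$ specialization is correct and coincides with the paper's proof: the constant $\frac{(-8)^2}{2\pi}=\frac{32}{\pi}$, the series term, and $R^{(2)}_1(x)=A^2x\bigl(\log x+2B/A\bigr)$ with your value of $2B/A$. The paper inserts this into Theorem~\ref{Theorem Number field analogue Id 2nd paper} and passes over the sign. The gap is your proposed repair of that sign.

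There is no $(-1)^k$ in $R_0(x)=-R_1(x)/\sqrt{x}$. $R_0(x)$ is the residue of $\Omega_{\mathbb F}^k(s)x^{-s/2}$ at $s=0$, and $R_1(x)$ is the residue of $\Omega_{\mathbb F}^k(s)x^{s/2}$ at $s=1$. The substitution $s\mapsto 1-s$, together with $\Omega_{\mathbb F}(s)=\Omega_{\mathbb F}(1-s)$, flips the sign of a residue exactly once, whatever the order of the pole. The $(-1)^k$ from $s^k$ is cancelled by the $(-1)^{k-1}$ from the $k-1$ derivatives. Your own expansion $s^2\Omega_{\mathbb F}^2(s)=(A-Bs)^2+\cdots$ gives $R_0(x)=-2AB-\frac{A^2}{2}\log x=-R_1(x)/\sqrt{x}$. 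So re-tracking signs just returns the plus sign of \eqref{New W}.

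In fact no sign bookkeeping can produce the printed minus sign. The contour shift in the proof of Theorem~\ref{Theorem Number field analogue Id 2nd paper}, before its ``shuffling'' step, shows that the $\frac{32}{\pi}$-integral equals $\widetilde{\mathcal H}(x):=\sqrt{x}\bigl(S_{\mathbb F,2}(x^2)-R^{(2)}_1(1/x)\bigr)$ at $x=\beta$. Since the integral is even in $\log\alpha$, it also equals $\widetilde{\mathcal H}(\alpha)$.

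The printed $\mathcal H_{\mathbb F,2}(x)$ equals $\widetilde{\mathcal H}(x)+\sqrt{x}R^{(2)}_1(1/x)-R^{(2)}_1(x)/\sqrt{x}$. The added term changes sign under $x\mapsto 1/x$ and is not identically zero (it behaves like $-A^2\sqrt{x}\log x$ as $x\to\infty$). Hence the printed function does not even satisfy $\mathcal H_{\mathbb F,2}(\alpha)=\mathcal H_{\mathbb F,2}(\beta)$.

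The plus-sign function from \eqref{New W} is invariant under $x\mapsto 1/x$. However, it exceeds $\widetilde{\mathcal H}$ by the invariant, nonzero quantity $R^{(2)}_1(x)/\sqrt{x}+\sqrt{x}R^{(2)}_1(1/x)$, so it does not equal the integral either. For $\mathbb F=\mathbb Q$ and $\alpha=1$ it equals $4\sum_{n}d(n)K_0(2\pi n)+\gamma-\log 4\pi<0$, whereas the integral of $\Xi^2(t/2)(1+t^2)^{-2}$ is positive. The ``shuffling'' step in the paper's proof of the theorem is where this goes wrong.

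What you can actually prove, by keeping the residue in the form $R^{(2)}_1(1/x)$, is the corollary with residual term
\begin{equation*}
-\frac{DH_{\mathbb F}^2}{(2\pi)^{2r_2}\sqrt{x}}\Bigl[2\gamma_{\mathbb F}+\log\frac{D}{\pi^d x}-d\gamma-2(r_1+r_2)\log 2\Bigr].
\end{equation*}
For $\mathbb F=\mathbb Q$ this is exactly Dixit's \eqref{Extended Koshliakov formula}.
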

\begin{corollary}\label{Gen Koshliakov Formula by Dixit}
If $\mathbb{F} = \mathbb{Q}$, then Corollary \textup{\ref{Gen Koshliakov Formula}} reduces to the extended Ramanujan--Koshliakov identity \eqref{Extended Koshliakov formula}.
\end{corollary}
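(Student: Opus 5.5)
We specialize Corollary~\ref{Gen Koshliakov Formula} to $\mathbb{F}=\mathbb{Q}$ and identify every ingredient with its classical counterpart. For $\mathbb{Q}$ we have $d=1$, $r_1=1$, $r_2=0$, $D=1$, $h_{\mathbb{Q}}=R_{\mathbb{Q}}=1$ and $w_{\mathbb{Q}}=2$. The class number formula \eqref{Class number formula} then gives $H_{\mathbb{Q}}=2\cdot 1/2=1$. The Euler--Kronecker constant of $\mathbb{Q}$ is the constant term of $\zeta'/\zeta$ at $s=1$, which is $\gamma_{\mathbb{Q}}=\gamma$. Since $\zeta_{\mathbb{Q}}=\zeta$, we get $\mathtt{a}_{\mathbb{Q},2}(n)=d(n)$. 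Finally, comparing \eqref{Xi_F(t)} with the definition of $\Xi$ shows $\Xi_{\mathbb{Q}}=\Xi$. So the right-hand side of \eqref{Extended number field RK formula} is literally $\frac{32}{\pi}\int_0^\infty \Xi^2(t/2)\cos(\frac t2\log\alpha)(1+t^2)^{-2}\,\mathrm{d}t$, the right-hand side of \eqref{Extended Koshliakov formula}.

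Next we identify the series. By \eqref{Z value real} with $r_1=2$ we have $\tilde{Z}_{2,0}(x)=2V(x^2\mid 0,0)$. By \eqref{Bessel fn formula} with $a=b=0$, $V(x^2\mid 0,0)=2\mathrm{K}_0(2x)$, so $\tilde Z_{2,0}(x)=4\mathrm{K}_0(2x)$. The argument $4^{r_2}\pi^d nx/D$ becomes $\pi n x$. Hence the series part of $\mathcal{H}_{\mathbb{Q},2}(x)$ is $\sqrt{x}\cdot 4\sum_{n\ge1} d(n)\mathrm{K}_0(2\pi n x)$, matching the series in $\mathcal{H}_2(x)$.

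For the residual term, substituting the data gives
\[
-\sqrt{x}\left[2\gamma+\log\left(\tfrac{x}{\pi}\right)-\gamma-2\log 2\right]=-\sqrt{x}\left(\gamma+\log\tfrac{x}{4\pi}\right).
\]
As a check, I would recompute this directly from $R_1^{(2)}(x)/\sqrt{x}$, using the Laurent expansions $\zeta(s)=\frac{1}{s-1}+\gamma+\cdots$ and $\Gamma(s/2)$, $\pi^{-s/2}$ near $s=1$. Here one uses $\psi(1/2)=-\gamma-2\log 2$, which is the source of the $-d\gamma-2(r_1+r_2)\log2$ term.

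The one step I expect to need care is matching this residual term with the normalization of $\mathcal{H}_2$ in the introduction. There the term appears as $-(\gamma-\log(4\pi x))/\sqrt{x}$, which is precisely the image of $-\sqrt{x}(\gamma+\log\frac{x}{4\pi})$ under $x\mapsto 1/x$. My plan is to trace $R_0$ versus $R_1$ through the passage from \eqref{Theta reln for dedekind zeta power k} to \eqref{Theta reln for dedekind zeta power k-x by x^2}, that is, the relation $R_0(x)=-x^{-1/2}R_1(x)$ followed by the substitution $x\to x^2$. This should determine which form of the main term is attached to the $\mathrm{K}_0$-series at argument $x$. If a reciprocal appears, I would use the invariance $\mathcal{H}(\alpha)=\mathcal{H}(\beta)$ together with the evenness of $\cos(\frac t2\log\alpha)$ in $\log\alpha$ to compare the two forms. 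Once the residual terms agree, the two sides of \eqref{Extended number field RK formula} coincide term by term with those of \eqref{Extended Koshliakov formula}, which proves the corollary.
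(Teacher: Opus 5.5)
Your specializations ($H_{\mathbb Q}=1$, $\gamma_{\mathbb Q}=\gamma$, $\mathtt{a}_{\mathbb Q,2}(n)=d(n)$, $\Xi_{\mathbb Q}=\Xi$, $\tilde Z_{2,0}(x)=4\mathrm{K}_0(2x)$) are correct and match the paper's. The gap is the step you defer, and the mechanism you propose cannot close it.

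Put $\Sigma(x)=4\sqrt{x}\sum_{n\ge1}d(n)\mathrm{K}_0(2\pi nx)$ and $A(x)=\sqrt{x}\,(\gamma-\log(4\pi/x))$. Your own check $R^{(2)}_1(x)/\sqrt{x}$, which is what \eqref{New W} prescribes, gives $+A(x)$, not $-A(x)$. With the printed minus sign the function is not even invariant: it differs from the invariant $\Sigma(x)+A(x)$ by $-2A(x)$, and $A(x)\ne A(1/x)$. So $\mathcal{H}_{\mathbb Q,2}(x)=\Sigma(x)+A(x)$, while $\mathcal{H}_2(x)=\Sigma(x)-A(1/x)$.

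From $\mathcal{H}_{\mathbb Q,2}(\alpha)=\mathcal{H}_{\mathbb Q,2}(\beta)$ you can move residual terms across and obtain $\mathcal{H}_2(\alpha)=\mathcal{H}_2(\beta)$. However, the common value shifts by $A(\alpha)+A(\beta)$, which is symmetric in $\alpha,\beta$ and nonzero, and the evenness of the cosine cannot absorb it. In fact the two identities are incompatible. At $\alpha=1$ the integral $\frac{32}{\pi}\int_0^\infty\Xi^2(t/2)(1+t^2)^{-2}\,\mathrm{d}t$ is positive because $\Xi$ is real. But $\mathcal{H}_{\mathbb Q,2}(1)=4\sum_{n\ge1}d(n)\mathrm{K}_0(2\pi n)+\gamma-\log(4\pi)\approx-1.95$.

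The paper's proof makes the same move (``using $\beta=1/\alpha$ and rearranging the terms'', after a sign slip in the series), so it has the same defect. Its source is the last step of the proof of Theorem~\ref{Theorem Number field analogue Id 2nd paper}. There $-\sqrt{\beta}\,R^{(k)}_1(1/\beta)$ is replaced by $+R^{(k)}_1(\beta)/\sqrt{\beta}$. This preserves the relation $F(\alpha)=F(\beta)$ but changes the common value by the symmetric quantity $\sqrt{\beta}\,R^{(k)}_1(\alpha)+\sqrt{\alpha}\,R^{(k)}_1(\beta)$.

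What does work is to specialize the identity established just before that step, namely \eqref{One side with beta}:
\begin{equation*}
\frac{(-1)^k}{2\pi}\int_0^\infty\frac{2^{3k}\,\Xi_{\mathbb F}^k(t/2)}{(1+t^2)^k}\cos\Bigl(\frac t2\log\alpha\Bigr)\,\mathrm{d}t=\sqrt{\beta}\Bigl(S_{\mathbb F,k}(\beta^2)-R^{(k)}_1(1/\beta)\Bigr).
\end{equation*}
For $\mathbb F=\mathbb Q$ and $k=2$, your computations give $S_{\mathbb Q,2}(\beta^2)=4\sum_{n\ge1}d(n)\mathrm{K}_0(2\pi n\beta)$ and $R^{(2)}_1(1/\beta)=\beta^{-1}(\gamma-\log(4\pi\beta))$. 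So the right-hand side is exactly $\mathcal{H}_2(\beta)$, and the left-hand side is $\frac{32}{\pi}\int_0^\infty\Xi^2(t/2)(1+t^2)^{-2}\cos(\frac t2\log\alpha)\,\mathrm{d}t$. Evenness in $\log\alpha$ gives the same for $\mathcal{H}_2(\alpha)$, which is \eqref{Extended Koshliakov formula}.

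In other words, in the integral representation the residual term that goes with the series at argument $x$ is $-\sqrt{x}\,R^{(k)}_1(1/x)$, not $+R^{(k)}_1(x)/\sqrt{x}$. Tracing $R_0$ versus $R_1$ through the theta relation, as you plan, only confirms the latter form for the transformation $F(\alpha)=F(\beta)$. It says nothing about which function the integral actually equals.
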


\section{Prerequisites}
In this section, we record several standard results and lemmas that will be required in the proofs of our main theorems. We begin with two classical identities satisfied by the gamma function $\Gamma(s)$, namely Legendre's duplication formula and Euler's reflection formula, stated below,
\begin{align}
\Gamma(z)\Gamma\left(z + \frac{1}{2}\right)  &= 2^{1-2z}\sqrt{\pi}\Gamma(2z), \label{Legendre Duplication Formula} \\
\Gamma(z)\Gamma(1-z) &= \frac{\pi}{\sin(\pi z)}, \quad z \in \mathbb{C} \setminus \mathbb{Z}. \label{Reflection formula} 
\end{align}
These formulas play a central role in establishing the functional equation of the Riemann zeta function. Indeed, one can show that $\zeta(s)$ satisfies the asymmetric functional equation given by
\begin{align*}
\zeta(s) = 2^s \pi^{s-1} \sin\left(\frac{\pi s}{2}\right) \Gamma(1-s) \zeta(1-s).
\end{align*}
Motivated by this, we now turn to the number field setting and derive an analogous asymmetric functional equation for the Dedekind zeta function $\zeta_\mathbb{F}(s)$ in the following lemma.
\begin{lemma}\label{Series defn of Lambda_F}
Let $\mathbb{F}$ be a number field of degree $d = r_1 + 2r_2$. Then, $\zeta_\mathbb{F}(s)$ satisfies the following asymmetric form of the functional equation:
\begin{align}\label{Asymm fnal eqn for Dedekind zeta}
\zeta_\mathbb{F}(s) = \frac{\sqrt{D}}{2^{r_2}\pi^d}
\left(\frac{(2\pi)^d}{D} \right)^{s}\Gamma^{d}(1-s)\sin^{r_1}\left(\frac{\pi s}{2}\right)\sin^{r_2}(\pi s)\zeta_\mathbb{F}(1-s).
\end{align}
\end{lemma}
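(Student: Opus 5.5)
The plan is to start from the symmetric functional equation \eqref{Dedekind zeta functional equation} and turn the gamma factors at $s$ into gamma factors at $1-s$, using only the duplication formula \eqref{Legendre Duplication Formula} and the reflection formula \eqref{Reflection formula}. Writing \eqref{Omega_F of fnal eqn} out, the functional equation reads
\begin{align*}
\zeta_\mathbb{F}(s) = \left(\frac{D}{4^{r_2}\pi^d}\right)^{\frac{1}{2}-s}\frac{\Gamma^{r_1}\left(\frac{1-s}{2}\right)\Gamma^{r_2}(1-s)}{\Gamma^{r_1}\left(\frac{s}{2}\right)\Gamma^{r_2}(s)}\,\zeta_\mathbb{F}(1-s).
\end{align*}
I would then simplify the real-place ratio and the complex-place ratio one at a time.

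For the real places, I would use the classical computation that produces the asymmetric equation of $\zeta(s)$. Reflection with $z=\frac{s}{2}$ gives $1/\Gamma\left(\frac{s}{2}\right)=\pi^{-1}\sin\left(\frac{\pi s}{2}\right)\Gamma\left(1-\frac{s}{2}\right)$. Duplication with $z=\frac{1-s}{2}$ gives $\Gamma\left(\frac{1-s}{2}\right)\Gamma\left(1-\frac{s}{2}\right)=2^{s}\sqrt{\pi}\,\Gamma(1-s)$. Combining these,
\begin{align*}
\frac{\Gamma\left(\frac{1-s}{2}\right)}{\Gamma\left(\frac{s}{2}\right)} = 2^{s}\pi^{-\frac12}\sin\left(\frac{\pi s}{2}\right)\Gamma(1-s).
\end{align*}
For the complex places, reflection alone gives $\Gamma(1-s)/\Gamma(s)=\pi^{-1}\sin(\pi s)\Gamma^2(1-s)$. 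Raising the first identity to the power $r_1$ and the second to the power $r_2$, the gamma factors combine to $\Gamma^{r_1+2r_2}(1-s)=\Gamma^d(1-s)$. The sine factors are $\sin^{r_1}\left(\frac{\pi s}{2}\right)\sin^{r_2}(\pi s)$, as required.

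The remaining step is to collect the powers of $2$, $\pi$ and $D$, and this is the only place where slips are likely. The constant terms contribute $2^{r_1 s}\pi^{-r_1/2-r_2}$. The factor $\left(D/(4^{r_2}\pi^d)\right)^{1/2-s}$ splits into
\begin{align*}
D^{\frac12}2^{-r_2}\pi^{-\frac d2}\cdot D^{-s}4^{r_2s}\pi^{ds}.
\end{align*}
The $s$-dependent parts combine to $\left(2^{r_1+2r_2}\pi^d/D\right)^s=\left((2\pi)^d/D\right)^s\pi^{0}$; more precisely $2^{r_1s}4^{r_2s}=2^{ds}$, so the base is exactly $(2\pi)^d/D$. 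The constant parts give $\sqrt{D}\,2^{-r_2}\pi^{-d/2-r_1/2-r_2}$. Since $d/2+r_1/2+r_2=(r_1+2r_2)/2+r_1/2+r_2=d$, this equals $\sqrt{D}/(2^{r_2}\pi^d)$, which is the prefactor in \eqref{Asymm fnal eqn for Dedekind zeta}.

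The identity holds first for $s$ away from the integers, where every reflection step is valid. It then extends to all of $\mathbb{C}$ by meromorphic continuation, as an identity of meromorphic functions.
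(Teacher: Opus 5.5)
Your proposal is correct and follows the same route as the paper: you start from the symmetric functional equation written as a ratio of gamma factors, apply the reflection formula to $1/\Gamma\left(\frac{s}{2}\right)$ and $1/\Gamma(s)$, apply the duplication formula to $\Gamma\left(\frac{1-s}{2}\right)\Gamma\left(1-\frac{s}{2}\right)$, and then collect constants. Your bookkeeping of the powers of $2$, $\pi$ and $D$ is right, and your closing remark about extending from non-integer $s$ by meromorphic continuation makes explicit a point the paper leaves implicit.
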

\begin{proof}
From \eqref{Dedekind zeta functional equation}, we have
\begin{align}\label{Func eqn 2.0}
\zeta_\mathbb{F}(s) = \left( \frac{D}{4^{r_2} \pi^d} \right)^{\frac{1}{2} - s}
\frac{\Gamma^{r_1}\left(\frac{1-s}{2}\right) \Gamma^{r_2}(1-s)}{\Gamma^{r_1}\left(\frac{s}{2}\right) \Gamma^{r_2}(s)}\zeta_\mathbb{F}(1-s).
\end{align}
We use the reflection formula \eqref{Reflection formula} in \eqref{Func eqn 2.0} to obtain
\begin{align*}
\zeta_\mathbb{F}(s) = \left( \frac{D}{4^{r_2} \pi^d} \right)^{\frac{1}{2} - s}\left[\frac{\Gamma\left(\frac{1-s}{2}\right)\Gamma\left(1-\frac{s}{2}\right)}{\pi}\right]^{r_1}\left[\frac{\Gamma^{2}(1-s)}{\pi}\right]^{r_2}\!\sin^{r_1}\left(\frac{\pi s}{2}\right)\sin^{r_2}(\pi s)\zeta_\mathbb{F}(1-s).
\end{align*}
We now employ \eqref{Legendre Duplication Formula} in the last equation to get
\begin{align*}
\zeta_\mathbb{F}(s) = \frac{\sqrt{D}}{2^{r_2}\pi^d}
\left(\frac{(2\pi)^d}{D} \right)^{s}\Gamma^{r_1}\left(1-s\right)\Gamma^{2r_2}(1-s)\sin^{r_1}\left(\frac{\pi s}{2}\right)\sin^{r_2}(\pi s)\zeta_\mathbb{F}(1-s),
\end{align*}
which gives the desired result.
\end{proof}
We now express the function $\lambda_{\mathbb{F}}(y)$, defined in \eqref{Generalized Lambda} in terms of a series expansion in the following lemma.
\begin{lemma}\label{Special integral lemma}
Let $\delta$ be a real number such that $0<\delta<1$ and $\mathbb{F}$ be a number field of degree $d$. Then, for $0<\Re(y)<1$, we have
\begin{align*}
\lambda_{\mathbb{F}}(y) &= \frac{1}{2\pi i}\int\limits_{(1+\delta)}
\frac{\zeta_\mathbb{F}(1-s)}{\sin^{d}(\pi s)}y^{-s} \mathrm{d}s = R^{(d+1)}_0(y) + R^{(r_2+1)}_1(y) + \sum_{k =1}^\infty R^{(d)}_{-k}(y),
\end{align*}
where the terms $R^{(d+1)}_0(y), R^{(r_2+1)}_1(y)$ and $R^{(d)}_{-k}(y)$ are given by
\begin{align*}
R^{(d+1)}_0(y) &= \frac{1}{d!}\lim_{s \to 0}\frac{\mathrm{d}^{d}}{\mathrm{d}s^{d}}\left(s^{d+1}\frac{\zeta_\mathbb{F}(1-s)}{\sin^{d}(\pi s)}y^{-s} \right),\\
R^{(r_2+1)}_1(y) &= \frac{1}{(r_2)!}\lim_{s\to 1}\frac{\mathrm{d}^{r_2}}{\mathrm{d}s^{r_2}}\left((s-1)^{r_2+1}\frac{\zeta_\mathbb{F}(1-s)}{\sin^{d}(\pi s)}y^{-s} \right), \\
R^{(d)}_{-k}(y) &= \frac{1}{(d-1)!}\lim_{s\to -k}\frac{\mathrm{d}^{d-1}}{\mathrm{d}s^{d-1}}\left((s+k)^{d}\frac{\zeta_\mathbb{F}(1-s)}{\sin^{d}(\pi s)}y^{-s} \right).
\end{align*}
\end{lemma}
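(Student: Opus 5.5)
The plan is to shift the line of integration in \eqref{Generalized Lambda} to the left and collect residues, in the standard way for Mellin--Barnes integrals. Fix $0<\delta<1$ and $0<\Re(y)<1$. For a large integer $N$, consider the rectangle with vertices $1+\delta\pm iT$ and $-N-\tfrac12\pm iT$. By the residue theorem, the integral over its boundary equals the sum of the residues of
$$f(s)=\frac{\zeta_\mathbb{F}(1-s)}{\sin^d(\pi s)}\,y^{-s}$$
at the poles inside.

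First we locate the poles and their orders. The factor $\sin^{-d}(\pi s)$ has a pole of order $d$ at every integer $s=n$. The factor $\zeta_\mathbb{F}(1-s)$ has a simple pole at $s=0$, and it has zeros at $1-s=-k$ for $k\ge1$: these are zeros of order $r_1+r_2$ at even $k$ and of order $r_2$ at odd $k$. These zeros lie at $s=k+1\ge 2$, which is to the right of the contour, so they do not matter here.

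Inside the strip $-N-\tfrac12<\Re(s)<1+\delta$, the poles are as follows.
\begin{itemize}
\item At $s=0$ there is a pole of order $d+1$, coming from $\sin^{-d}$ and from the pole of $\zeta_\mathbb{F}$ at $1$.
\item At $s=-k$, $1\le k\le N$, there is a pole of order $d$. Here $\zeta_\mathbb{F}(1+k)$ is finite and nonzero.
\item At $s=1$, $\zeta_\mathbb{F}(0)$ vanishes to order $r_1+r_2-1$, since $\zeta_\mathbb{F}(s)$ has a zero of order $r_1+r_2-1$ at $s=0$. The net order is therefore $d-(r_1+r_2-1)=r_2+1$.
\end{itemize}
Writing the residues by the usual formula for a pole of order $m$ gives exactly $R_0^{(d+1)}(y)$, $R_1^{(r_2+1)}(y)$ and $R_{-k}^{(d)}(y)$. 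The order at $s=1$ can be double-checked with Lemma~\ref{Series defn of Lambda_F}, or directly from the functional equation \eqref{Dedekind zeta functional equation}: the factor $\Gamma^{r_1}(s/2)\Gamma^{r_2}(s)$ has a pole of order $r_1+r_2$ at $s=0$, while $\Omega_\mathbb{F}$ has only a simple pole there. If the residue happened to vanish for a smaller actual order, the stated formula with the higher-order expression still gives the correct value, so we need not fuss over exact orders.

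Next we dispose of the horizontal segments. On $\Im s=\pm T$ with $-N-\tfrac12\le\Re s\le 1+\delta$, we use $|\sin(\pi s)|^{-d}\ll e^{-d\pi T}$ together with polynomial growth of $\zeta_\mathbb{F}(1-s)$ in vertical strips (from convexity and the functional equation). Since $|y^{-s}|=|y|^{-\sigma}e^{t\arg y}$ with $|\arg y|<\pi/2$, the exponential factor is dominated, because $d\pi>\pi/2$. Hence the horizontal integrals tend to $0$ as $T\to\infty$.

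The main obstacle is the left vertical line $\Re s=-N-\tfrac12$ as $N\to\infty$. There we apply Lemma~\ref{Series defn of Lambda_F} to $\zeta_\mathbb{F}(1-s)$, or simply note that $\zeta_\mathbb{F}(1-s)$ is bounded because $\Re(1-s)=N+\tfrac32>1$, with $|\zeta_\mathbb{F}(1-s)|\le\zeta_\mathbb{F}(3/2)$. We also have $|\sin(\pi s)|\ge\cosh(\pi t)\ge 1$ on half-integer abscissae. So the integrand is bounded by
$$\frac{|y|^{N+1/2}e^{|t\arg y|}}{\cosh^d(\pi t)},$$
and the integral is $O(|y|^{N+1/2})$. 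This tends to $0$ when $|y|<1$. The hypothesis $\Re(y)<1$ should be read as, or strengthened to, $|y|<1$ for this step; for real $y\in(0,1)$ the two conditions coincide. The same bound, applied to the individual residues $R_{-k}^{(d)}(y)=O(k^{A}|y|^{k}\log^{d}(1/|y|))$, shows that $\sum_k R^{(d)}_{-k}(y)$ converges absolutely.

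Letting $T\to\infty$ and then $N\to\infty$ therefore yields
$$\lambda_\mathbb{F}(y)=R_0^{(d+1)}(y)+R_1^{(r_2+1)}(y)+\sum_{k\ge1}R_{-k}^{(d)}(y),$$
with the residues at the integers $s\le 0$ and $s=1$ in the stated forms.
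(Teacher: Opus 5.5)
Your proof is correct and follows the same route as the paper: shift the line $\Re(s)=1+\delta$ to the left through a rectangle, and collect the residues at $s=0$, $s=1$, $s=-k$, of orders $d+1$, $r_2+1$, $d$. The horizontal sides vanish by the $e^{-d\pi|t|}$ decay of $\sin^{-d}(\pi s)$, and the left side vanishes by absolute convergence of $\zeta_\mathbb{F}(1-s)$. Your observation that the left-side bound $O(|y|^{N+1/2})$ needs $|y|<1$, not just $0<\Re(y)<1$, correctly exposes an imprecision in the stated hypothesis. Already for $\mathbb{F}=\mathbb{Q}$ the residues $(-1)^k\zeta(1+k)y^k/\pi$ do not tend to $0$ when $|y|\ge 1$. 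Your placement of the pole of $\zeta_\mathbb{F}(1-s)$ at $s=0$ and its zero of order $r_1+r_2-1$ at $s=1$ is also stated more accurately than in the paper's text, which swaps them.
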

\begin{proof}
We consider a rectangular contour with vertices $1+\delta - iT, 1+\delta + iT, \alpha +iT$ and $\alpha -iT$, where $\alpha < 0$. By Cauchy's residue theorem, we have
\begin{align*}
\frac{1}{2\pi i}\left( \int\limits_{1+\delta-iT}^{1+\delta +iT} + \int\limits_{1+\delta + iT}^{\alpha +iT} + \int\limits^{\alpha -iT}_{\alpha +iT} + \int\limits_{\alpha -iT}^{1+\delta -iT}\right) \frac{\zeta_\mathbb{F}(1-s)}{\sin^{d}(\pi s)}y^{-s} \mathrm{d}s &= \mathcal{R},
\end{align*}
where $\mathcal{R}$ is the sum of the residues of the integrand function inside the contour. As $T \to \infty$, the horizontal integrals vanish because the exponential decay of $\sin^{-d}(\pi s)$ dominates the polynomial growth of $\zeta_{\mathbb F}(1-s)$. Taking $\alpha \to -\infty$, the left vertical integral likewise vanishes, as $\sin^{-d}(\pi s)$ continues to suppress $\zeta_{\mathbb F}(1-s)$ while $y^{-s}$ decays to zero under the absolute convergence of $\zeta_{\mathbb F}(1-s)$. We now analyse the poles of the integrand function. The Dedekind zeta function $\zeta_{\mathbb F}(1-s)$ has a zero of order $r_1+r_2-1$ at $s=0$ and simple pole at $s=1$. Thus, the integrand has a pole of order $d+1$ at $s=0$, a pole of order $r_2+1$ at $s=1$, and poles of order $d$ at the negative integers. Therefore,
\begin{align*}
\frac{1}{2\pi i}\int\limits_{(1+\delta)}\frac{\zeta_\mathbb{F}(1-s)}{\sin^{d}(\pi s)}y^{-s} \text{d}s &= R^{(d+1)}_0(y) + R^{(r_2+1)}_1(y) + \sum_{k =1}^\infty R^{(d)}_{-k}(y),
\end{align*}
where the terms $R^{(d+1)}_0(y), R^{(r_2+1)}_1(y)$ and $R^{(d)}_{-k}(y)$ are given by
\begin{align*}
R^{(d+1)}_0(y) &= \frac{1}{d!}\lim_{s \to 0}\frac{\text{d}^{d}}{\text{d}s^{d}}\left(s^{d+1}\frac{\zeta_\mathbb{F}(1-s)}{\sin^{d}(\pi s)}y^{-s} \right),\\
R^{(r_2+1)}_1(y) &= \frac{1}{(r_2)!}\lim_{s\to 1}\frac{\mathrm{d}^{r_2}}{\mathrm{d}s^{r_2}}\left((s-1)^{r_2+1}\frac{\zeta_\mathbb{F}(1-s)}{\sin^{d}(\pi s)}y^{-s} \right), \\
R^{(d)}_{-k}(y) &= \frac{1}{(d-1)!}\lim_{s\to -k}\frac{\text{d}^{d-1}}{\text{d}s^{d-1}}\left((s+k)^{d}\frac{\zeta_\mathbb{F}(1-s)}{\sin^{d}(\pi s)}y^{-s} \right).
\end{align*}
\end{proof}
\begin{remark}
Upon taking $\mathbb{F}=\mathbb{Q}$ in the above lemma, we obtain
\begin{align}
\lambda_{\mathbb{Q}}(y) = \frac{1}{2\pi i}\int\limits_{(1+\delta)}
\frac{\zeta(1-s)}{\sin(\pi s)}y^{-s} \mathrm{d}s &= R^{(2)}_0(y) + R_1(y) + \sum_{k =1}^\infty R_{-k}(y), \label{Integral from Titchmarsh unsolved}
\end{align}
where the terms on the right-hand side of \eqref{Integral from Titchmarsh unsolved}
are given by
\begin{align*}
R^{(2)}_0(y) &= \lim_{s \to 0}\frac{\mathrm{d}}{\mathrm{d}s}\left(s^{2}\frac{\zeta(1-s)}{\sin(\pi s)}y^{-s} \right) = \frac{1}{\pi}(\gamma+\log(y)), \\
R_1(y) &= \lim_{s \rightarrow 1} (s-1)\frac{\zeta(1-s)}{\sin(\pi s)}y^{-s} = \frac{1}{2\pi y}, \\
R_{-k}(y) &= \lim_{s \rightarrow -k} (s+n)\frac{\zeta(1-s)}{\sin(\pi s)}y^{-s} = \frac{(-1)^k\zeta(1+k)y^k}{\pi},
\end{align*}
where we have used the series expansions of $\zeta(s)$, $y^{-s}$ and $\frac{1}{\sin(\pi s)}$ to evaluate $R^{(2)}_0(y)$. Substituting the above values of $R^{(2)}_0(y), R_1(y)$ and $R_{-k}(y)$ in \eqref{Integral from Titchmarsh unsolved}, we recover the following classical result from \cite[p.~201, formula~5.74]{Oberhettinger}:
{\allowdisplaybreaks
\begin{align}
\lambda_{\mathbb{Q}}(y) = \frac{1}{2\pi i}\int\limits_{(1+\delta)}
\frac{\zeta(1-s)}{\sin(\pi s)}y^{-s} \mathrm{d}s &= \frac{1}{\pi}(\gamma +\log(y)) + \frac{1}{2\pi y} + \sum_{k=1}^\infty \frac{(-1)^k\zeta(1+k)y^k}{\pi} \notag \\
&= -\frac{1}{\pi}\left(\psi(y+1)- \frac{1}{2y} - \log(y)\right)  \notag \\
&= -\frac{1}{\pi}\left(\psi(y) + \frac{1}{2y} - \log(y)\right) = -\frac{\lambda(y)}{\pi}, \label{Integral from Titchmarsh}
\end{align}}
where $\lambda(y)$ is defined in \eqref{Series Lambda}, and in the last step, we have utilized $\psi(x+1) = \psi(x) + \frac{1}{x}$ \cite[p.~54]{Temme} and the Taylor series expansion $$\psi(x+1) = -\gamma - \sum_{n=1}^{\infty} (-1)^k\zeta(1+k)x^k, \quad |x|<1.$$
\end{remark}
To keep this paper self-contained, we give a number-field analogue of Titchmarsh's formula \cite[p.~35]{Titchmarsh}.
Let $f(t)$ be a function such that $f(t) := |\phi(it)|^2 = \phi(it)\phi(-it), \,\, t \in \mathbb{R}$, where $\phi(s)$ is an analytic function with the characterisation $\overline{\phi(s)} = \phi(\bar{s})$.
For $z \in \mathbb{C}$, we have
\begin{align*}
\int\limits_0^\infty f(t)\Xi_{\mathbb{F}}(t)\cos(zt) \text{d}t = \frac{1}{2}\int\limits_{-\infty}^\infty f(t)\Xi_{\mathbb{F}}(t)y^{it}\text{d}t 
= \frac{1}{2}\int\limits_{-\infty}^\infty \phi(it)\phi(-it)\xi_{\mathbb{F}}\left(\frac{1}{2} + it\right) y^{it}\text{d}t,
\end{align*}
where $\xi_{\mathbb{F}}(s)$ and $\Xi_{\mathbb{F}}(t)$ are defined in \eqref{xi function} and \eqref{Xi_F(t)}. We let $\frac{1}{2}+it = s$, replace $t \mapsto \frac{t}{2}$ and $z \mapsto 2z$ to obtain
\begin{align}\label{Special int}
\int\limits_0^\infty f\left(\frac{t}{2}\right)\Xi_{\mathbb{F}}\left(\frac{t}{2}\right)\cos(zt) \text{d}t &= \frac{1}{i\sqrt{y}}\int\limits_{\left(\frac{1}{2}\right)}\phi\left(s - \frac{1}{2}\right)\phi\left(\frac{1}{2} - s\right)\xi_{\mathbb{F}}(s)y^s \text{d}s,
\end{align}
where $y = e^{2z}$. The above formula will be used in the proof of our second main theorem.
We shall also use the following Laurent and Taylor expansions. As $s\to1$, the Dedekind zeta function has the Laurent expansion
\begin{align}
(s-1)\zeta_{\mathbb{F}}(s) &= H_{\mathbb{F}}+\gamma_{\mathbb{F}}(s-1)+O\!\left((s-1)^2\right),
\label{zeta around 1}
\end{align}
where $\gamma_{\mathbb{F}}$ denotes the Euler-Kronecker constant of the number field $\mathbb{F}$ and $H_{\mathbb F}$ is the residue of $\zeta_{\mathbb F}(s)$ at pole $s=1$, as given by the class number formula \eqref{Class number formula}. As $s\to1$, the gamma function and its powers have the expansions
\begin{align}
\Gamma(s) &= 1 - \gamma(s-1) + O((s-1)^2), \notag \\
\Rightarrow \Gamma^k(s) &= 1 - k\gamma(s-1) + O((s-1)^2), \label{Series Gamma(s)} \\
\Gamma^k\left(\frac{s}{2}\right) &= \pi^\frac{k}{2} - k\pi^\frac{k}{2}\left(\frac{\gamma}{2} + \log2\right)(s-1) + O((s-1)^2). \label{Series Gamma(s/2)}
\end{align}
We also have
\begin{align}\label{exp series}
x^\frac{s}{2} &= \sqrt{x} + \frac{\sqrt{x}}{2}\log(x)(s-1) + O((s-1)^2).
\end{align}
Combining \eqref{zeta around 1}, \eqref{Series Gamma(s)}, \eqref{Series Gamma(s/2)}, and \eqref{exp series}, we obtain the Laurent expansion of $\Omega_{\mathbb F}(s)$ at $s=1$: 
\begin{align}
&\Omega_{\mathbb{F}}(s) = \left(\frac{D}{4^{r_2}\pi^d}\right)^{\frac{s}{2}}\Gamma^{r_1}\left(\frac{s}{2}\right)\Gamma^{r_2}(s)\zeta_{\mathbb{F}}(s) \notag \\
&=\frac{\sqrt{D}}{(2\pi)^{r_2}}
\frac{H_{\mathbb F}}{s-1} + \frac{\sqrt{D}H_{\mathbb F}}{(2\pi)^{r_2}}
\left[\gamma_{\mathbb F} + \frac{\log\left(\frac{D}{\pi^d}\right)}{2}-\frac{d\gamma}{2}-(r_1+r_2)\log 2\right] + O(s-1). \label{Laurent series for Omega at s=1}
\end{align}

\section{Proofs of main results}

\begin{proof}[Theorem \rm{\ref{Theorem Number field analogue Id 220}}][]
Let us define
\begin{align*}
f(t) := \frac{\Xi_{\mathbb{F}}(t)}{\left(\frac{1}{4} + t^2\right)^{2-r_1-r_2}}
\left[\Gamma\!\left(\frac{it}{2}-\frac{1}{4} \right)\Gamma\!\left(-\frac{it}{2}-\frac{1}{4}\right)\right]^{r_1}\!\left[\Gamma\!\left(it-\frac{1}{2}\right)\Gamma\!\left(-it -\frac{1}{2}\right)\right]^{r_2}.
\end{align*}
Considering the function $\phi(s)$ as
\begin{align*}
\phi(s) = \left(\frac{1}{2}+s\right)^{r_1+r_2-2}\sqrt{\xi_{\mathbb{F}}\!\left(\frac{1}{2}-s\right)}\Gamma^{r_1}\!\left(\frac{s}{2}-\frac{1}{4}\right)\Gamma^{r_2}\!\left(s-\frac{1}{2} \right),
\end{align*}
we have $f(t) = \phi(it)\phi(-it)$.
Substituting this into \eqref{Special int} with $y=e^{2z}$ and applying the functional equation $s\Gamma(s) = \Gamma(s+1)$, we obtain
{\allowdisplaybreaks
\begin{align}
&\int\limits_{0}^{\infty}\frac{16\, \Xi_\mathbb{F}^2\left(\frac{t}{2}\right)}{(1+t^2)^{2-r_1-r_2}} \left[\frac{\Gamma\!\left(\frac{it}{4} - \frac{1}{4}\right)\Gamma\!\left(- \frac{it}{4}-\frac{1}{4} \right)}{4}\right]^{r_1}\left[\frac{\Gamma\!\left(\frac{it}{2} - \frac{1}{2}\right)\Gamma\!\left(-\frac{it}{2} - \frac{1}{2}  \right)}{4}\right]^{r_2} \cos(zt)\text{d}t \notag\\
&= \frac{1}{i\sqrt{y}}
\int\limits_{\left(\frac{1}{2}\right)}\frac{(\xi_\mathbb{F}(s))^2}{(-s(s-1))^{2-r_1-r_2}}\left[\Gamma\left(\frac{s-1}{2}\right)\Gamma\left(-\frac{s}{2}\right)\right]^{r_1}\left[\Gamma(-s)\Gamma(s-1)\right]^{r_2} y^s \text{d}s \notag\\
&= \frac{4^{r_1}}{i\sqrt{y}}
\int\limits_{\left(\frac{1}{2}\right)}
\frac{(\xi_\mathbb{F}(s))^2}{(s(s-1))^{2}}
\left[\Gamma\left(\frac{s+1}{2}\right)\Gamma\left(1-\frac{s}{2}\right)\right]^{r_1}\left[\Gamma(1-s)\Gamma(s)\right]^{r_2}
y^s \text{d}s \notag\\
&= \frac{4^{r_1-1}}{i\sqrt{y}}
\int\limits_{\left(\frac{1}{2}\right)}\Bigl[\Gamma\!\left(\frac{s+1}{2}\right)\Gamma\!\left(1-\frac{s}{2}\right)\Gamma^2\!\left(\frac{s}{2}\right)\Bigr]^{r_1}\Bigl[\Gamma(1-s)\Gamma^3(s)\Bigr]^{r_2} 
\zeta_\mathbb{F}^2(s)\left(\frac{yD}{4^{r_2}\pi^d}\right)^s \text{d}s \notag \\
&= \frac{\sqrt{D}\pi^{\frac{3r_1}{2}+r_2}2^{3r_1-r_2}}{4i\sqrt{y}}
\int\limits_{\left(\frac{1}{2}\right)}
\frac{\zeta_\mathbb{F}(s)\zeta_\mathbb{F}(1-s)}{\sin^{d}(\pi s)}y^s \text{d}s, \label{1st integral}
\end{align}}
where in the penultimate step, we invoked \eqref{xi function}, followed by Legendre’s duplication formula \eqref{Legendre Duplication Formula}, Euler’s reflection formula \eqref{Reflection formula} for $\Gamma(s)$, and the asymmetric functional equation \eqref{Asymm fnal eqn for Dedekind zeta} for $\zeta_\mathbb{F}(s)$.
We now denote the integrand in \eqref{1st integral} by
\begin{align*}
I_{\mathbb{F}, y}(s) 
&= \frac{\zeta_\mathbb{F}(s)\zeta_\mathbb{F}(1-s)}{\sin^{d}(\pi s)}y^s.
\end{align*}
To evaluate the integral in \eqref{1st integral}, we shift the line of integration from $\Re(s)=\frac{1}{2}$ to $\Re(s)=1+\delta$, with $\delta\in(0,1)$. This shift introduces a pole of order $r_2+2$ at $s=1$, which must be treated carefully.
Given $T>0$, the residue theorem implies
\begin{align}
\int\limits_{\frac{1}{2}-iT}^{\frac{1}{2}+iT} I_{\mathbb{F}, y}(s) \text{d}s &= \Bigg(
\int\limits_{\frac{1}{2}-iT}^{1+\delta-iT}
+ \int\limits_{1+\delta-iT}^{1+\delta+iT}
+ \int\limits_{1+\delta+iT}^{\frac{1}{2}+iT}
\Bigg)\! I_{\mathbb{F}, y}(s) \text{d}s - 2\pi i\, \mathcal{R}^{(r_2+2)}_1(y),
\label{CRT 1}
\end{align}
where
\begin{align*}
\mathcal{R}^{r_2+2}_1(y) &= \frac{1}{(r_2+1)!}\lim_{s\to 1}\frac{\text{d}^{r_2+1}}{\text{d}s^{r_2+1}}\left((s-1)^{r_2+2}I_{\mathbb{F}, y}(s)\right).
\end{align*}
From the Stirling formula of gamma function, we deduce that the integrals along the horizontal lines from $\frac{1}{2}\pm iT$ to $1\pm \delta+iT$ vanish as $T \rightarrow \infty$.
Now we further simplify the vertical integral from $1+\delta-i\infty$ to $1+\delta+i\infty$ using the series definition \eqref{Dedekind zeta} of $\zeta_\mathbb{F}(s)$ and interchange the integral with the summation due the absolute convergence to have
\begin{align}\label{line integral 1 to delta}
\int\limits_{(1+\delta)} I_{\mathbb{F},y}(s)\text{d}s &= \sum_{m=1}^{\infty}\mathtt{a}_{\mathbb{F}}(m)\lambda_\mathbb{F}\left(\frac{m}{y}\right),
\end{align}
where
\begin{align*}
\lambda_\mathbb{F}\left(\frac{m}{y}\right) &= \int\limits_{(1+\delta)}\frac{\zeta_\mathbb{F}(1-s)}{\sin^{d}(\pi s)}\left(\frac{m}{y}\right)^{-s} \text{d}s.
\end{align*}
Hence substituting the value of \eqref{line integral 1 to delta} in \eqref{CRT 1} with $T \to \infty$ and using the final value of \eqref{CRT 1} in \eqref{1st integral}, we get
\begin{align}\label{The last integral}
&-\frac{2^{6-5r_1-r_2}}{\sqrt{D}\pi^{\frac{3r_1}{2}+r_2}i}\int\limits_{0}^{\infty}\left|\Xi_\mathbb{F}\left(\frac{t}{2}\right)\Gamma^{r_1}\left(\frac{-1+it}{4}\right)\Gamma^{r_2}\left(\frac{-1+it}{2}\right)\right|^2\frac{\cos(zt)}{(1+t^2)^{2-r_1-r_2}}\,\text{d}t \notag \\
&= \frac{2\pi i}{\sqrt{y}}\sum_{m=1}^{\infty}\mathtt{a}_{\mathbb{F}}(m)\lambda_\mathbb{F}\left(\frac{m}{y}\right) - \frac{2\pi i}{\sqrt{y}}\, \mathcal{R}^{(r_2+2)}_1(y),
\end{align}
where
\begin{align*}
\mathcal{R}^{(r_2+2)}_1(y) &= \frac{1}{(r_2+1)!}\lim_{s\to 1}\frac{\text{d}^{r_2+1}}{\text{d}s^{r_2+1}}\left((s-1)^{r_2+2}I_{\mathbb{F}, y}(s)\right).
\end{align*}
In \eqref{The last integral}, we have used the fact that $\overline{\Gamma(s)} = \Gamma(\overline{s})$ and $\Xi_{\mathbb F}(t)$ is real for real values of $t$. Substituting $z=\frac{1}{2}\log\alpha$ into \eqref{The last integral} and observing that $y=e^{2z}=\alpha$ with $\alpha\beta=1$, we get
\begin{align}
& \frac{2^{5-5r_1-r_2}}{\sqrt{D}\pi^{\frac{3r_1}{2}+r_2+1}}\int\limits_{0}^{\infty}\left|\Xi_\mathbb{F}\left(\frac{t}{2}\right)\Gamma^{r_1}\left(\frac{-1+it}{4}\right)\Gamma^{r_2}\left(\frac{-1+it}{2}\right)\right|^2\frac{\cos\left(\frac{t}{2}\log\alpha\right)}{(1+t^2)^{2-r_1-r_2}}\,\text{d}t \notag \\
&= \sqrt{\beta}\left\{\sum_{m=1}^{\infty}\mathtt{a}_{\mathbb{F}}(m)\lambda_\mathbb{F}(m\beta) - \mathcal{R}^{(r_2+2)}_1\left(\frac{1}{\beta}\right)\right\}. \label{Eqn with beta}
\end{align}
Since the left hand side of the above equation is unchanged under the substitution $\alpha \mapsto \beta$, we interchange $\beta$ by $\alpha$ in \eqref{Eqn with beta} and combine it with \eqref{Eqn with beta} to get the desired transformation formula \eqref{Number field analogue of transformation formula}.
\end{proof}

\begin{proof}[Corollary \rm{\ref{Recovery of main id}}][]
Taking $\mathbb{F} = \mathbb{Q}$ in Theorem \ref{Theorem Number field analogue Id 220}, and using \eqref{Integral from Titchmarsh} together with the fact that $\mathtt{a}_{\mathbb{Q}}(m) = 1$, we obtain
\begin{align}\label{Lambda}
\sum_{m=1}^{\infty}\mathtt{a}_{\mathbb{Q}}(m)\lambda_\mathbb{Q}(mx) = -\frac{1}{\pi}\sum_{m=1}^{\infty}\lambda(mx).
\end{align}
Now, we calculate the term $\mathcal{R}^{(r_2+2)}_1\left(\frac{1}{x}\right) = \mathcal{R}^{(2)}_1\left(\frac{1}{x}\right)$ in \eqref{G(x) for no field}, where
\begin{align}
\mathcal{R}^{(2)}_1(x) &= \lim_{s\to 1}\frac{\text{d}}{\text{d}s}\left((s-1)^{2}\frac{\zeta(s)\zeta(1-s)}{\sin(\pi s)}x^s\right).
\end{align}
We use the following Laurent series expansions to find the above residual term.
\begin{align*}
\frac{(s-1)}{\sin(\pi s)} &= -\frac{1}{\pi} - \frac{\pi}{6}(s-1)^2 + \ldots, \\
(s-1)\zeta(s) &= 1 + \gamma(s-1) + \ldots, \\
x^s &= x + x\log(x)(s-1) + \ldots, \\
\zeta(1-s) &= -\frac{1}{2} + \frac{1}{2}\log(2\pi)(s-1) + \ldots.
\end{align*}
Hence, collecting the coefficient of $(s-1)$ in the Laurent series expansion of $(s-1)^{2}\frac{\zeta(s)\zeta(1-s)}{\sin(\pi s)}x^s$, we have
\begin{align}
\mathcal{R}^{(2)}_1(x) &= \frac{x}{2\pi}\left[\gamma - \log\left(\frac{2\pi}{x}\right)\right], \notag \\
\Rightarrow \mathcal{R}^{(2)}_1\left(\frac{1}{x}\right) &=  \frac{1}{2\pi x}\left[\gamma - \log(2\pi x)\right].\label{Res for Q}
\end{align}
Hence, substituting the value of \eqref{Lambda} and \eqref{Res for Q} in Theorem \ref{Theorem Number field analogue Id 220}, we get the desired identity \eqref{Id 220}.
\end{proof}

\begin{proof}[Theorem \rm{\ref{Theorem Number field analogue Id 2nd paper}}][]
Let us define 
\begin{align*}
f(t) := \frac{2^k\Xi_{\mathbb{F}}^{k-1}(t)}{\left(\frac{1}{4} + t^2\right)^{k}}.
\end{align*}
We know that $f(t) = \phi(it)\phi(-it)$, so we have
\begin{align*}
\phi(s) = \frac{2^{\frac{k}{2}}\xi_{\mathbb{F}}^{\frac{k-1}{2}}\left(\frac{1}{2}-s\right)}{\left(\frac{1}{2} + s\right)^{\frac{k}{2}}\left(\frac{1}{2} - s\right)^{\frac{k}{2}}},
\end{align*}
where $\xi_{\mathbb{F}}(s)$ is defined in \eqref{xi function}. Substituting the above value of $\phi(s)$ in \eqref{Special int} with $y=e^{2z}$, we obtain
{\allowdisplaybreaks
\begin{align}
\int\limits_{0}^{\infty}\frac{2^{3k}\Xi_{\mathbb{F}}^{k}\left(\frac{t}{2}\right)}{\left(1 + t^2\right)^{k}} \cos(zt)\text{d}t 
&= \frac{1}{i\sqrt{y}}
\int\limits_{\left(\frac{1}{2}\right)}\frac{2^k\left[\xi_\mathbb{F}(1-s)\xi_\mathbb{F}(s)\right]^{\frac{k-1}{2}}}{s^k(1-s)^k} \xi_\mathbb{F}(s) y^s \text{d}s \notag\\
&= \frac{1}{i\sqrt{y}}
\int\limits_{\left(\frac{1}{2}\right)}\left(\frac{2\xi_\mathbb{F}(s)}{s(1-s)}\right)^k y^s \text{d}s \notag\\
&= \frac{(-1)^k}{i\sqrt{y}}
\int\limits_{\left(\frac{1}{2}\right)} \Omega_\mathbb{F}^k(s)y^s \text{d}s,
\label{1st integral I1}
\end{align}}
where in the penultimate step, we invoked \eqref{xi function}. To solve the above integral in \eqref{1st integral I1}, we shift the line of integration from $\Re(s)=\frac{1}{2}$ to $\Re(s)=1+\delta$, with $\delta\in(0,1)$. This shift introduces a pole of order $k$ at $s=1$, which must be treated carefully. Let $T>0$, the residue theorem implies 
\begin{align*}
\int\limits_{\frac{1}{2}-iT}^{\frac{1}{2}+iT} \Omega_\mathbb{F}^k(s)y^s \text{d}s &= \Bigg(
\int\limits_{\frac{1}{2}-iT}^{1+\delta-iT}
+ \int\limits_{1+\delta-iT}^{1+\delta+iT}
+ \int\limits_{1+\delta+iT}^{\frac{1}{2}+iT}
\Bigg)\! \Omega_\mathbb{F}^k(s)y^s \text{d}s - 2\pi i\, R^{(k)}_1(y),
\end{align*}
where
\begin{align*}
R^{(k)}_1(y) &= \frac{1}{(k-1)!}\lim_{s\to 1}\frac{\text{d}^{k-1}}{\text{d}s^{k-1}}\left((s-1)^{k}\Omega_\mathbb{F}^k(s)y^s \right).
\end{align*}
The horizontal integrals vanish as $T \rightarrow \infty$ due to the Stirling formula for the gamma factors in the integrand and we are left with
\begin{align}\label{After T infinity}
\int\limits_{\left(\frac{1}{2}\right)}\Omega_\mathbb{F}^k(s)y^s \text{d}s = \int\limits_{\left(1+\delta\right)}\Omega_\mathbb{F}^k(s)y^s \text{d}s - 2\pi i\, R^{(k)}_1(y).
\end{align}
Note that the integral on the right hand side has been evaluated by the author and Maji \cite[p.~14]{BM2}, and can be written as
\begin{align}\label{Half integral value}
\int\limits_{\left(1+\delta\right)}\Omega_\mathbb{F}^k(s)y^s \text{d}s = 2\pi i\, S_{\mathbb{F}, k}\left(\frac{1}{y^2}\right),
\end{align}
where $S_{\mathbb{F}, k}(x)$ is given by
\begin{align*}
S_{\mathbb{F}, k}(x) := \sum_{n=1}^{\infty}\mathtt{a}_{\mathbb{F}, k}(n)\tilde{Z}_{kr_1, kr_2}\left(\frac{2^{kr_2}\pi^{kd/2} n\sqrt{x}}{\sqrt{D^k}} \right), \quad |\Arg(x)| < \frac{\pi d}{2}.
\end{align*}
Substituting \eqref{Half integral value} in \eqref{After T infinity} and then using the final value of the integral on the left hand side of \eqref{After T infinity} into \eqref{1st integral I1}, we get
\begin{align*}
\int\limits_{0}^{\infty}\frac{2^{3k}\Xi_{\mathbb{F}}^{k}\left(\frac{t}{2}\right)}{\left(1 + t^2\right)^{k}} \cos(zt)\text{d}t &= \frac{(-1)^k 2\pi}{\sqrt{y}}\left(S_{\mathbb{F}, k}\left(\frac{1}{y^2}\right) - R^{(k)}_1(y)\right).
\end{align*}
Substituting $z=\frac{1}{2}\log\alpha$ into the last equation and observing that $y=e^{2z}=\alpha$ with $\alpha\beta=1$, we get
\begin{align}\label{One side with beta}
\frac{(-1)^k}{2\pi}\int\limits_{0}^{\infty}\frac{2^{3k}\Xi_{\mathbb{F}}^{k}\left(\frac{t}{2}\right)}{\left(1 + t^2\right)^{k}} \cos\left(\frac{t}{2}\log\alpha\right)\text{d}t &= \sqrt{\beta}\left(S_{\mathbb{F}, k}\left(\beta^2\right) - R^{(k)}_1\left(\frac{1}{\beta}\right)\right).
\end{align}
Since the left-hand side of the above equation is invariant under the interchange $\alpha\leftrightarrow\beta$, we replace $\beta$ by $\alpha$ in \eqref{One side with beta} and combine the resulting relation with \eqref{One side with beta}. We also shuffle the residual terms and use $\alpha = \frac{1}{\beta}$ to obtain
\begin{align*}
\frac{(-1)^k}{2\pi}\int\limits_{0}^{\infty}\frac{2^{3k}\Xi_{\mathbb{F}}^{k}\left(\frac{t}{2}\right)}{\left(1 + t^2\right)^{k}} \cos\left(\frac{t}{2}\log\alpha\right)\text{d}t &= \sqrt{\alpha}\left(S_{\mathbb{F}, k}\left(\alpha^2\right) + \frac{R^{(k)}_1(\alpha)}{\alpha}\right) \\
&= \sqrt{\beta}\left(S_{\mathbb{F}, k}\left(\beta^2\right) + \frac{R^{(k)}_1(\beta)}{\beta}\right),
\end{align*}
which is the desired identity \eqref{Extended trans formula}.
\end{proof}

\begin{proof}[Corollary \rm{\ref{Case k=1 mt2}}][]
For $k=1$, the term $R^{(k)}_1(x)$ in Theorem \ref{Theorem Number field analogue Id 2nd paper} reduces to
\begin{align*}
R^{(1)}_1(x) &= \lim_{s\to 1}(s-1)\Omega_\mathbb{F}(s)x^{s} \\
&= \left(\frac{D}{4^{r_2}\pi^d}\right)^{\frac{1}{2}}\Gamma^{r_1}\left(\frac{1}{2}\right)x\lim_{s\to 1}(s-1)\zeta_{\mathbb{F}}(s) \\
&= \frac{\sqrt{D}x}{(2\pi)^{r_2}}H_\mathbb{F}.
\end{align*}
Substituting this expression for $R^{(1)}_1(x)$ with $k=1$ into Theorem \ref{Theorem Number field analogue Id 2nd paper} yields the desired identity in Corollary \ref{Case k=1 mt2}.
\end{proof}

\begin{proof}[Corollary \rm{\ref{F=Q, k=1}}][]
When $\mathbb{F} = \mathbb{Q}$, we have $r_1=1, r_2=0, D=1, d=1, \mathtt{a}_{\mathbb{F}}(n) = 1$ and $H_\mathbb{F} = 1$. We also use \eqref{Z value real} and then \eqref{Exp fn formula} to obtain
\begin{align*}
\tilde{Z}_{r_1, r_2}\left(\frac{2^{r_2}\pi^{d/2} nx}{\sqrt{D}} \right) = 2V(\pi n^2 x^2|0) = 2e^{- n^2\pi x^2}
\end{align*}
Using these values in Corollary \ref{Case k=1 mt2}, we prove Corollary \ref{F=Q, k=1}.
\end{proof}

\begin{proof}[Corollary \rm{\ref{Gen Koshliakov Formula}}][]
For $k=2$, the term $R^{(k)}_1(x)$ in Theorem \ref{Theorem Number field analogue Id 2nd paper} reduces to
\begin{align*}
R^{(2)}_1(x) &= \lim_{s\to 1}\frac{\mathrm{d}}{\mathrm{d}s}\left((s-1)^2\Omega_\mathbb{F}^2(s)x^{s} \right).
\end{align*}
To solve the above term, we use \eqref{exp series} and \eqref{Laurent series for Omega at s=1} to find the Laurent series expansion of $\Omega_\mathbb{F}^2(s)x^s$ around 1 to have
\begin{align}
&(s-1)^2\Omega_{\mathbb F}^2(s)x^s \notag \\
&= \frac{D H_{\mathbb F}^2 x}{(2\pi)^{2r_2}}
\bigg[ 1+\bigg\{ 2\gamma_{\mathbb F} +\log\left(\frac{Dx}{\pi^d}\right) -d\gamma -2(r_1+r_2)\log 2 \bigg\}(s-1) +O\left((s-1)^2\right) \bigg].\notag
\end{align}
In particular,
\begin{align*}
R^{(2)}_1(x) &= \frac{D H_{\mathbb F}^2x}{(2\pi)^{2r_2}} \left[ 2\gamma_{\mathbb F} +\log\left(\frac{Dx}{\pi^d}\right) -d\gamma -2(r_1+r_2)\log 2\right].
\end{align*}
Upon inserting the above expression for $R^{(2)}_1(x)$ into \eqref{Extended trans formula} and taking $k=2$ in Theorem \ref{Theorem Number field analogue Id 2nd paper}, we obtain Corollary \ref{Gen Koshliakov Formula}.
\end{proof}

\begin{proof}[Corollary \rm{\ref{Gen Koshliakov Formula by Dixit}}][]
For $\mathbb{F}=\mathbb{Q}$, we have
$$
r_1=1,\quad r_2=0,\quad D=1,\quad d=1,\quad
\gamma_{\mathbb{F}}=\gamma,\quad
\mathtt{a}_{\mathbb{F},2}(n)=d(n),\quad H_{\mathbb{F}}=1.
$$
Moreover, by \eqref{Z value real} and \eqref{Bessel fn formula},
$$
\tilde{Z}_{2r_1,2r_2}\left(\frac{4^{r_2}\pi^d nx}{D}\right)
=\tilde{Z}_{2,0}(\pi nx)
=2V(\pi^2n^2x^2\mid0,0)
=4K_0(2\pi nx).
$$
Consequently, the function $\mathcal{H}_{\mathbb{F},2}(x)$ in Corollary \ref{Gen Koshliakov Formula} becomes
\begin{align*}
\mathcal{H}_{\mathbb{Q},2}(x)
&=4\sqrt{x}\sum_{n=1}^\infty d(n)K_0(2\pi nx)
+\sqrt{x}\left[2\gamma+\log\left(\frac{x}{\pi}\right)-\gamma-2\log2\right]\\
&=\sqrt{x}\left(\gamma-\log\left(\frac{4\pi}{x}\right)\right)
+4\sqrt{x}\sum_{n=1}^\infty d(n)K_0(2\pi nx).
\end{align*}
Substituting the above expression for $\mathcal{H}_{\mathbb{Q},2}(x)$ into \eqref{Extended number field RK formula} and taking $\alpha\beta=1$, we obtain
\begin{align*}
\sqrt{\alpha}\left[
\gamma-\log\left(\frac{4\pi}{\alpha}\right)
-4\sum_{n=1}^{\infty}d(n)K_0(2\pi n\alpha)
\right]&=\sqrt{\beta}\left[
\gamma-\log\left(\frac{4\pi}{\beta}\right)
-4\sum_{n=1}^{\infty}d(n)K_0(2\pi n\beta)
\right]\\
&=\frac{32}{\pi}\int_0^\infty
\frac{\Xi^2\left(\frac t2\right)}
{(1+t^2)^2}
\cos\left(\frac t2\log\alpha\right)\text{d}t.
\end{align*}
Using $\beta=1/\alpha$ and rearranging the terms, the preceding identity takes the form
\begin{align*}
\sqrt{\alpha}\left(4\sum_{n=1}^{\infty}d(n)K_0(2\pi n\alpha) - 
\frac{\gamma-\log(4\pi\alpha)}{\alpha}\right)
&=\sqrt{\beta}\left(4\sum_{n=1}^{\infty}d(n)K_0(2\pi n\beta) - \frac{\gamma-\log(4\pi\beta)}{\beta}\right)\\
&=\frac{32}{\pi}\int_0^\infty
\frac{\Xi^2\left(\frac{t}{2}\right)}
{(1+t^2)^2}
\cos\left(\frac t2\log\alpha\right)\text{d}t.
\end{align*}
This is precisely the extended Ramanujan--Koshliakov formula \eqref{Extended Koshliakov formula}.
\end{proof}

\section{Concluding remarks}
In this paper, we established a number field analogue of Ramanujan's identity \eqref{Id 220} and obtained an integral representation for the transformation formula established by the present author and Maji \cite{BM2}. As a special case, corresponding to $k=1$, Corollary \ref{Case k=1 mt2} yields a number field analogue of the extended Jacobi theta relation, together with its corresponding integral representation. The Jacobi theta relation played a fundamental role in Hardy's proof of the infinitude of non-trivial zeros of the Riemann zeta function on the critical line. This naturally raises the question of whether the integral representation in Corollary \ref{Case k=1 mt2} can be exploited to investigate the non-trivial zeros of the Dedekind zeta function on the critical line. In particular, it would be interesting to determine whether an analogue of Hardy's argument can be developed in the number field setting.

\textbf{Acknowledgements}
The author extends heartfelt thanks to Prof. Atul Dixit for his time, insightful discussion and generous guidance. The author is deeply grateful to her supervisor, Dr. Bibekananda Maji, for carefully reviewing the manuscript and offering valuable feedback that greatly enhanced the quality of this work. Financial support from the Prime Minister’s Research Fellowship (PMRF), Government of India, under Grant No. 2102227, is gratefully acknowledged. The author also wishes to thank IIT Indore for fostering an encouraging and collaborative research environment.

\textbf{Data Availability Statement}
No new data were generated or analyzed in this work.

\end{document}